\newtheorem{theorem}{Theorem}
\newtheorem{lemma}{Lemma}
\newtheorem{corollary}{Corollary}
\theoremstyle{definition}
\newcommand{\norm}[1]{\left\lVert#1\right\rVert}
\newcommand{\vb}[1]{\vec{\mathbf{#1}}}
\DeclareRobustCommand{\topbot}{\genfrac{}{}{0pt}{}}
\newcommand{\C}{\mathbb{C}}
\newcommand{\R}{\mathbb{R}}
\newcommand{\Q}{\mathbb{Q}}
\newcommand{\N}{\mathbb{N}}
\newcommand{\Z}{\mathbb{Z}}
\newcommand{\SL}{\textnormal{SL}}
\title[Mean Values and Diophantine Approximation in Number Fields]{Mean Values over Lattices in Number Fields and Effective Diophantine Approximation}
\author{Nathan Hughes}
\address{Department of Mathematics, University of Exeter, Exeter, EX4 4QF, UK}
\email{nh477@exeter.ac.uk}
\subjclass[2020]{11J68, 37A44, 11H60}
\keywords{Effective Diophantine Approximation, Rogers' Mean Value Formulas, Ergodic Theory}
\begin{document}
	\maketitle
	\begin{abstract}
		\centering We answer a question raised by Alam and Ghosh concerning an error term for a spiralling result in Diophantine approximation by rationals in a number field. The proof relies on a generalisation of Rogers' Mean Value Theorem to algebraic number fields and an effective ergodic theorem due to Gaposhkin.
	\end{abstract}
	\section{Introduction}
	In the past decade, there has been interest in quantitative results of Dirichlet's Approximation Theorem using dynamical methods. In \cite{athreyaErgodicTheoryDiophantine2016}, Athreya, Parrish and Tseng showed that the number of approximates (tuples $(\mathbf{p},q)\in  \Z^d\times \N$ satisfying $\norm{q\mathbf{x}-\mathbf{p}}<cq^{-{1}/{d}}$) to almost all vectors $\mathbf{x}\in \R^d$ with $1\leq q \leq T$ grows asymptotically with $\log T$. In \cite{athreyaSpiralingApproximationsSpherical2014}, the equidistribution of the approximates' directions was established. 
	By projecting a solution $(\mathbf{p},q)$ to the unit sphere $\mathbb{S}^{d-1}$ via the map $\theta_\mathbf{x}(\mathbf{p},q) = \tfrac{1}{\norm{q\mathbf{x}-\mathbf{p}}}(q\mathbf{x}-\mathbf{p})$, it was shown that the proportion of approximates pointing in a direction $A \subseteq \mathbb{S}^{d-1}$ is $\text{vol}(A)$, where $A$ is a set with boundary of measure zero and $\text{vol}$ is the Lebesgue probability measure on $\mathbb{S}^{d-1}$. A weighted version of this result was derived by Kleinbock, Shi and Weiss in \cite{kleinbockPointwiseEquidistributionError2017}, along with an error rate for the count of all approximates. 
	
	Some of these results were generalised to number fields by Alam and Ghosh in \cite{alamEquidistributionHomogeneousSpaces2020}, where they investigated weighted approximation by elements of rings of integers and the distribution of the approximates' directions. These results are obtained by applying Birkhoff's Ergodic Theorem to diagonal flows on a suitable space of lattices, counting the number of lattice points in a set, and using a Siegel-type formula relating integrals over the space of lattices to an integral over the underlying space. Alam and Ghosh also expressed interest in obtaining an error term for these results. 
	
	In this paper, we provide an error term by using a number field analogue of Rogers' moment formula (Theorem \ref{Rogers3}) in tandem with an effective ergodic theorem due to Gaposhkin (Corollary \ref{gaposhkinCorr}). Moment formulas for integrals over lattice spaces have been used by many authors to obtain error terms for quantitative number theory problems. Rogers' original moment formula \cite{rogersMeanValuesSpace1955} was used by Schmidt to prove lattice counting results in very general subsets of $\R^n$ in \cite{schmidtMETRICALTHEOREMGEOMETRY}. More recently, in \cite{hanRogersMeanValue2022}, for $S$ a finite subset of valuations on $\Q$, Han proved a higher moment formula and deduced an effective $S$-arithmetic version of both the Oppenheim Conjecture and Gauss' Circle Problem for star-shaped sets. Similar estimations in the adelic case were recently announced in a preprint by Kim \cite{kimAdelicRogersIntegral2022}.
	
	\subsection{Main Results}
	Let $K$ be an algebraic number field with ring of integers $\mathcal{O}_K$ and set of Archimedean non-conjugate embeddings $S$. To each $\nu\in S$ we write $K_\nu$ to denote the completion of $K$ with respect to $\nu$ and $\iota_\nu:K \hookrightarrow K_\nu$ the associated embedding. For $x \in K$, let $\iota_S(x) = \left(\iota_\nu(x)\right)_{\nu\in S}$ be the twisted diagonal embedding, whose image is a subset of $K_S := K \otimes_\Q \R$.
	
	Given $d\geq 2$, let $G =\prod_{\nu\in S}\SL_d(K_\nu)$ and $\Gamma=\SL_d(\iota_S(\mathcal{O}_K))$. Then $\Gamma$ is a lattice in $G$ and there exists a left-$G$-invariant probability measure $\mu$ on $X:=G/\Gamma$. Let $\lambda$ be the Haar measure on $K_S$, normalised so that $\mathcal{O}_K$ is a lattice of unit covolume. We will write $\Lambda$ to represent an arbitrary element in $X$, which corresponds to a lattice in $K_S^d$ that is a product of lattices in $K_\nu^d$ for each $\nu\in S$, and denote by $\Lambda_0 \in X$ the standard lattice in $K_S^d$.
	\subsubsection{Rogers' Theorem over Number Fields}
	Our first result is a Rogers type formula over $K_S^d$. The proof closely follows Schmidt's argument of the formula in \cite{schmidtMittelwerteBerGitter1957}, which is the case $K=\Q$.
	\begin{theorem}\label{Rogers3}
		Let $1\leq k \leq d-1$ and $f\in L^1((K_S^d)^k)$ be non-negative. Then
		\begin{gather*}
			\int_{X} \sum_{\vb{x}_1,\dots,\vb{x}_k \in \Lambda }f(\vb{x}_1,\dots,\vb{x}_k)\,d\mu = f(0,\dots,0) + \int_{(K_S^d)^k}f(\vb{x}_1,\dots,\vb{x}_k)\,d\lambda^k \\
			+ \sum_{r=1}^{k-1} \sum_{D \in \mathcal{D}_{r,k}} \frac{1}{\textnormal{covol}(\Phi(D))} \int_{(K_S^d)^r} f\left(\left(\vb{x}_1,\dots,\vb{x}_r\right)\cdot\iota_S(D)\right) d\lambda^r
		\end{gather*}
		where $\lambda^r$ is the product measure on $(K_S^d)^r$, $\mathcal{D}_{r,k}$ is the set of all rank $r$ row-reduced echelon matrices in $\textnormal{Mat}_{r\times k}(K)$ and for $D \in \mathcal{D}_{r,k}$,
		\begin{equation*}
			\Phi(D) = \left\{ \vb{V} = (\vb{x}_1,\dots,\vb{x}_r)\in \Lambda_0^r\,:\, \textnormal{rank}(\vb{V})=r,\, \vb{V}\cdot \iota_S(D)\in \Lambda_0^k \right\}
		\end{equation*}
	\end{theorem}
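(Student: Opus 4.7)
The plan is to follow Schmidt's argument for the case $K=\Q$ from \cite{schmidtMittelwerteBerGitter1957}, classifying $k$-tuples in $\Lambda$ by their $K$-rank and canonical form before applying a Siegel-type mean value formula in each stratum. The first step is the decomposition: for any tuple $(\vb{x}_1,\dots,\vb{x}_k)\in \Lambda^k$ of nonzero $K$-rank $r$, there is a unique representation $(\vb{x}_1,\dots,\vb{x}_k) = (\vb{y}_1,\dots,\vb{y}_r)\cdot \iota_S(D)$, where $(\vb{y}_1,\dots,\vb{y}_r)\in \Lambda^r$ is $K$-linearly independent and $D$ is the reduced row echelon matrix encoding the linear relations among the $\vb{x}_i$. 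For $r<k$ this gives $D\in \mathcal{D}_{r,k}$, while for $r=k$ the only such matrix is the identity. Uniqueness is the standard canonicity of reduced echelon form over the field $K$. This splits the sum on the LHS into contributions indexed by $r\in\{0,1,\dots,k\}$ and, for $r\geq 1$, by $D$.

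The second step is to apply the number field analogue of Siegel's mean value formula,
\begin{equation*}
\int_X \sum_{\substack{(\vb{y}_1,\dots,\vb{y}_r)\in \Lambda^r \\ \textnormal{rank } r}} g(\vb{y}_1,\dots,\vb{y}_r)\, d\mu = \int_{(K_S^d)^r} g\, d\lambda^r,
\end{equation*}
valid for $1\le r\le d-1$, in each stratum. The rank-$k$ stratum (with $D=I$) applied to $g=f$ directly produces the $\int_{(K_S^d)^k}f\, d\lambda^k$ term. For $1\leq r<k$ and fixed $D\in \mathcal{D}_{r,k}$, the sum restricts to those $(\vb{y}_1,\dots,\vb{y}_r)$ satisfying $(\vb{y}_1,\dots,\vb{y}_r)\cdot \iota_S(D)\in \Lambda^k$. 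This defines a sublattice of $\Lambda^r$ whose covolume, by $G$-invariance of $\mu$ and transport to $\Lambda_0$, equals $\textnormal{covol}(\Phi(D))$. Applying Siegel's formula to this sublattice produces the summand $\frac{1}{\textnormal{covol}(\Phi(D))}\int_{(K_S^d)^r} f\bigl((\vb{y}_1,\dots,\vb{y}_r)\cdot \iota_S(D)\bigr)\, d\lambda^r$, where the rank-$r$ condition can be dropped for free since rank-deficient tuples form a $\lambda^r$-null set. The zero tuple contributes $f(0,\dots,0)$, and summing these contributions yields the identity.

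The main obstacles are establishing the Siegel mean value formula over $K_S^d$ and carrying out the covolume computation for $\Phi(D)$. The former requires an unfolding argument identifying a fundamental domain for the action of $\Gamma=\SL_d(\iota_S(\mathcal{O}_K))$ on rank-$r$ tuples in $(K_S^d)^r$, suitably adapted to the $\mathcal{O}_K$-module structure via the embedding $\iota_S$ and the normalization under which $\mathcal{O}_K$ has unit covolume. The latter reduces to identifying $\Phi(D)$ with a sublattice of $\mathcal{O}_K^{dr}$ cut out by the integrality constraints imposed by the non-pivot entries of $D$; this bookkeeping, together with the Siegel formula, is where most of the technical work will lie. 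A preliminary reduction to continuous $f$ of compact support, via monotone convergence, ensures that all interchanges of summation and integration are justified throughout.
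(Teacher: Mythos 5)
Your proposal follows essentially the same route as the paper: the rank/row-reduced-echelon decomposition of $\Lambda^k$, a Weil--Siegel type mean value formula for linearly independent $r$-tuples (the paper's Theorem \ref{linind}, proved by the unfolding/induction you anticipate), and the identification of $\Phi_\Lambda(D)$ as a lattice of covolume $\textnormal{covol}(\Phi(D))$ independent of $\Lambda$ (the paper's Lemma \ref{lattice}). The sketch is correct in structure and the remaining work you flag is exactly where the paper spends its effort.
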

	\subsubsection{Effective Diophantine Approximation in Number Fields}
	For $\nu\in S$, let $d_\nu =1$ if $\nu$ is a real valuation and $d_\nu=2$ otherwise. Let $m,n$ be positive integers satisfying $d=m+n$ and let $\mathbf{a} \in \R_{>0}^{m\cdot \#S},\mathbf{b}\in \R_{>0}^{n \cdot\#S}$ be weight vectors such that
	\begin{equation*}
		\sum_{i=1}^m \sum_{\nu\in S}d_\nu a_{i\nu} = \sum_{j=1}^n \sum_{\nu\in S}d_\nu b_{j \nu} = 1
	\end{equation*}
	To these weights we associate functions $\norm{\cdot}_\mathbf{a} : K_S^m \rightarrow \R_{\geq 0}$, $\norm{\cdot}_{\mathbf{b}} : K_S^n \rightarrow \R_{\geq 0}$, defined by
	\begin{equation*}
		\norm{\vb{x}}_\mathbf{a} = \max_{\stackrel{\nu\in S}{1\leq i \leq m}}|x_{i\nu}|^\frac{1}{a_{i\nu}},\,\quad \norm{\vb{y}}_\mathbf{b} = \max_{\stackrel{\nu\in S}{1\leq j \leq n}}|y_{i\nu}|^\frac{1}{b_{j\nu}}
	\end{equation*}
	which will serve as analogues for norms on $K_S^m$ and $K_S^n$ respectively. We wish to count the number of approximates to $\vartheta \in \text{Mat}_{m\times n}(K_S)$ of the form $(\vb{p},\vb{q})\in \iota_S(\mathcal{O}_K)^m \times \iota_S(\mathcal{O}_K)^n$ satisfying the inequalities
	\begin{equation}\label{Dioph}
		\norm{\vartheta\vb{q} - \vb{p}}_\mathbf{a}\norm{\vb{q}}_\mathbf{b} < c , \quad 1\leq \norm{\vb{q}}<e^T
	\end{equation} 
	For $T,c>0$, define the set
	\begin{equation*}
		E_{T,c} = \left\{(\vb{x},\vb{y})\in K_S^m\times K_S^n\,:\, \norm{\vb{x}}_\mathbf{a}\norm{\vb{y}}_\mathbf{b}<c , \, 1 \leq \norm{\vb{y}}< e^T\right\}
	\end{equation*}
	We associate to $\vartheta$ the lattice 
	\begin{equation*}
	\Lambda_\vartheta =\begin{pmatrix} \text{Id} & \vartheta \\ 0 &\text{Id}\end{pmatrix}\Lambda_0
	\end{equation*} 
	It is easy to see that $\#(E_{T,c}\cap \Lambda_\vartheta)$ counts the number of solutions to the inequalities in (\ref{Dioph}). Our next theorem provides an effective count of these solutions in terms of $T$.
	\begin{theorem}\label{maintheorem}
		Let $d\geq 3$ and $c>0$. Then for $\mu$-almost every $\Lambda\in X$, any $\epsilon >0$, and sufficiently large $T>0$,
		\begin{equation*}
			\left|\#(E_{T,c}\cap \Lambda) - \lambda(E_{T,c}) \right| = o\left(T^{\frac{1}{2}}(\log T)^\frac{3}{2}(\log \log T)^{\frac{1}{2}+\epsilon}\right)
		\end{equation*}
		In particular, for almost every $\nu \in \textnormal{Mat}_{m\times n }(K_S)$ and for every $\epsilon >0$,
		\begin{equation*}
			\left|\#(E_{T,c}\cap \Lambda_\vartheta) - \lambda(E_{T,c}) \right| = o\left(T^{\frac{1}{2}}(\log T)^\frac{3}{2}(\log \log T)^{\frac{1}{2}+\epsilon}\right)
		\end{equation*}
	\end{theorem}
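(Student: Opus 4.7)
The plan is to reduce the counting problem to a Birkhoff sum for a diagonal flow on $X$ and apply Gaposhkin's effective ergodic theorem, using Theorem \ref{Rogers3} to compute the necessary moments of the Siegel transform. Choose a diagonal one-parameter subgroup $(g_t)_{t\in\R}\subset G$ whose action on $K_S^m\oplus K_S^n$ is calibrated so that $g_1$ multiplies $\norm{\vb{y}}$ by $e$ while preserving the product $\norm{\vb{x}}_\mathbf{a}\norm{\vb{y}}_\mathbf{b}$; the weight conditions on $\mathbf{a}$ and $\mathbf{b}$ are what make such a choice lie in $G$. Define the unit slice
\begin{equation*}
B = \left\{(\vb{x},\vb{y})\in K_S^m\times K_S^n : \norm{\vb{x}}_\mathbf{a}\norm{\vb{y}}_\mathbf{b}<c,\ 1\leq \norm{\vb{y}}<e\right\},
\end{equation*}
and the Siegel transform $\widehat{f}(\Lambda):=\sum_{\vb{v}\in\Lambda\setminus\{0\}}f(\vb{v})$. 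A disjoint decomposition of $E_{T,c}$ into $g_j$-translates of $B$ for $0\le j<\lfloor T\rfloor$, together with an $O(1)$ error from the remaining fractional slice, yields
\begin{equation*}
\#(E_{T,c}\cap\Lambda) = \sum_{j=0}^{N-1}\widehat{\mathbf{1}_B}(g_{-j}\Lambda) + O(1), \qquad N=\lfloor T\rfloor,
\end{equation*}
while the $k=1$ case of Theorem \ref{Rogers3} combined with the $\lambda$-invariance of $g_t$ gives $\lambda(E_{T,c}) = N\lambda(B) + O(1) = N\int_X \widehat{\mathbf{1}_B}\,d\mu + O(1)$.

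It therefore suffices to establish, $\mu$-almost everywhere,
\begin{equation*}
\left|\sum_{j=0}^{N-1}\bigl(\widehat{\mathbf{1}_B}(g_{-j}\Lambda) - \lambda(B)\bigr)\right| = o\bigl(N^{1/2}(\log N)^{3/2}(\log\log N)^{1/2+\epsilon}\bigr),
\end{equation*}
which is precisely the rate furnished by Gaposhkin's theorem (Corollary \ref{gaposhkinCorr}) applied to the measure-preserving transformation $g_{-1}$ on $(X,\mu)$ and to the observable $\widehat{\mathbf{1}_B}$, provided the latter lies in $L^2(X,\mu)$ and its covariances decay at an appropriate rate. The $L^2$-bound is supplied by the $k=2$ case of Theorem \ref{Rogers3}: the integral $\int_X\widehat{\mathbf{1}_B}^{\,2}\,d\mu$ decomposes into a finite collection of terms, namely $\mathbf{1}_B(0)$, the product integral $\int_{(K_S^d)^2}\mathbf{1}_B\otimes\mathbf{1}_B\,d\lambda^2$, and integrals of $\mathbf{1}_B\bigl((\cdot)\iota_S(D)\bigr)$ over $K_S^d$ for $D\in\mathcal{D}_{1,2}$; each is finite because $B$ is bounded in $\vb{y}$ and, on the locus $\norm{\vb{x}}_\mathbf{a}\norm{\vb{y}}_\mathbf{b}<c$, has finite $\vb{x}$-volume.

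The main technical obstacle is to verify the covariance decay hypothesis of Gaposhkin's theorem for the unbounded, non-smooth observable $\widehat{\mathbf{1}_B}$. The standard strategy is to sandwich $\mathbf{1}_B$ between smooth compactly supported approximants, invoke exponential mixing of the $(g_t)$-action on the arithmetic quotient $X$ to control correlations of the smoothed observables, and use the $k=2$ Rogers estimate once more to bound the thin-shell correction in the approximation; tracking how the smoothing scale depends on the time separation $|i-j|$ is the delicate point, and must be done so that the final decay fits within Gaposhkin's spectral window. With these verifications in place the first assertion of Theorem \ref{maintheorem} follows, and the second (about $\Lambda_\vartheta$) is deduced by a Fubini argument along the unipotent subgroup parametrizing $\vartheta\mapsto\Lambda_\vartheta$: this parametrization pushes Lebesgue measure on $\textnormal{Mat}_{m\times n}(K_S)$ into $X$ in a manner compatible with $\mu$, so that $\mu$-null exceptional sets pull back to Lebesgue-null exceptional sets of $\vartheta$.
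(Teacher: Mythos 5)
Your overall architecture (reduce to a Birkhoff average of a Siegel transform, control the second moment via Theorem \ref{Rogers3}, feed the result into Gaposhkin) matches the paper's, but two of your steps conceal the actual work, and one of them would send you down a much harder road than necessary.

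First, the second-moment computation. You describe $\int_X\widehat{\mathbf{1}_B}^{\,2}\,d\mu$ as decomposing into ``a finite collection of terms'' and argue only that each term is finite. But $\mathcal{D}_{1,2}=\{(1,\gamma):\gamma\in K^\times\}$ is infinite, and for a general number field even the unit group $\mathcal{O}_K^\times$ is infinite, so the Rogers formula produces an \emph{infinite} sum $\sum_{\gamma\in K^\times}\tfrac{1}{\textnormal{covol}(\Phi((1,\gamma)))}\int\mathbbm{1}(\vb{x})\mathbbm{1}(\gamma\vb{x})\,d\lambda$ whose convergence --- and whose bound by $O(\lambda(E_{T,c}))$ rather than $O(\lambda(E_{T,c})^2)$ --- is the heart of the matter. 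The paper spends most of its effort here: it decomposes $K^\times$ into units times principal ideals $(p)/(q)$, bounds the unit sum by a geometry-of-numbers argument with Dirichlet's Unit Theorem (Lemma \ref{UnitBound}, which in turn needs Lemma \ref{Zcount} and Ball's cube-slicing theorem), and then controls the ideal sums with the Dedekind--Weber ideal count (Theorem \ref{dedekindweber}) and Thunder's summation lemma (Lemma \ref{thunder}), using $d\geq 3$ to make the resulting Dedekind zeta values converge. None of this is present in your proposal, and without it the $L^2$ bound you need is unproven.

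Second, the dynamical input. You treat ``covariance decay'' as the hypothesis of Gaposhkin's theorem and propose to verify it by exponential mixing applied to smooth compactly supported approximants of the unbounded observable $\widehat{\mathbf{1}_B}$, conceding that the smoothing-scale bookkeeping is ``the delicate point.'' This is a genuine gap (the step is not carried out, and Siegel transforms are unbounded and non-smooth, so it is far from routine), and it is also avoidable: Gaposhkin's hypothesis is a bound on $R^*(\tau)$, the variance of the \emph{time average} $\tfrac{1}{\tau}\int_0^\tau\xi(x,t)\,dt$, not on individual correlations. The paper obtains $R^*(\tau)=O(1/\tau)$ with no mixing at all, by sandwiching $\tfrac{1}{R}\int_0^T\widehat{\mathbbm{1}}_{E_{R,c}}(g_t\Lambda)\,dt$ between the counting functions $\#((E_{T,c}\setminus E_{R,c})\cap\Lambda)$ and $\#(E_{T+R,c}\cap\Lambda)$ (equation (3.2) of Alam--Ghosh) and then applying the already-established variance bound $\int_X(\widehat{\mathbbm{1}}_{E_{T,c}}-\lambda(E_{T,c}))^2\,d\mu=O(\lambda(E_{T,c}))$ to both sides. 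You should replace your mixing step with this sandwich argument; it also disposes of the fractional-slice error in your discrete decomposition, which you assert is $O(1)$ but which is an unbounded function of $\Lambda$ along the orbit and so needs justification. Your concluding Fubini remark for $\Lambda_\vartheta$ is in the right spirit, though the paper instead cites the Birkhoff-genericity of $\Lambda_\vartheta$ from Alam--Ghosh.
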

	The theorem above can be generalised to an effective spiralling result for number fields. Let
	\begin{equation*}
		\mathbb{S}^{m\deg K -1} = \left\{\vb{x}\in K_S^m\,:\, \sum_{i=1}^m\sum_{\nu\in S}|x_{i\nu}|^2=1\right\}
	\end{equation*}
	and $\pi_\mathbf{a}(\vb{x}) = \left\{(e^{ta_{1\nu}}x_{1\nu},\dots,e^{ta_{m\nu}}x_{m\nu} )_{\nu\in S}\,:\, t \in \R\right\}\cap \mathbb{S}^{m\deg K -1}$ be the $\mathbf{a}$-weighted projection of $\vb{x}$ onto $\mathbb{S}^{m\deg K-1}$. Similarly define the $\mathbf{b}$-weighted projection $ \pi_\mathbf{b}:K_S^n\rightarrow \mathbb{S}^{n \deg K-1}$. Let
	\begin{equation*}
		E_{T,c}(A,B) = \left\{(\vb{x},\vb{y})\in K_S^m \times K_S^n \,:\, \topbot{ \norm{\vb{x}}_\mathbf{a}\norm{\vb{y}}_\mathbf{b} < c, \, 1 \leq \norm{\vb{y}}_\mathbf{b} < e^T, }{ \pi_\mathbf{a}(\vb{x}) \in A, \, \pi_\mathbf{b}(\vb{y})\in B}\right\}
	\end{equation*}
	Our final theorem gives an effective count of the distribution of lattice points with respect to these projections and the corresponding result for measuring the distribution of Diophantine approximates follows.
	\begin{theorem}\label{spiralling}
		Let $d\geq 3$ and $c>0$. Then for $\mu$-almost every $\Lambda\in X$, measurable $A\subseteq \mathbb{S}^{m\deg K-1}$,$B \subseteq\mathbb{S}^{n\deg K -1}$ with measure $0$ boundary, any $\epsilon >0$, and sufficiently large $T>0$,
		\begin{equation*}
			\left|\#(E_{T,c}(A,B)\cap \Lambda) - \lambda(E_{T,c}(A,B)) \right| = o\left(T^{\frac{1}{2}}(\log T)^\frac{3}{2}(\log \log T)^{\frac{1}{2}+\epsilon}\right)
		\end{equation*}
		In particular, for almost every $\vartheta \in \textnormal{Mat}_{m\times n }(K_S)$ and for every $\epsilon >0$,
		\begin{equation*}
			\left|\#(E_{T,c}(A,B)\cap \Lambda_\vartheta) - \lambda(E_{T,c}(A,B)) \right| = o\left(T^{\frac{1}{2}}(\log T)^\frac{3}{2}(\log \log T)^{\frac{1}{2}+\epsilon}\right)
		\end{equation*}
	\end{theorem}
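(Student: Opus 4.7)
The strategy is to adapt the proof of Theorem \ref{maintheorem}, incorporating the spherical restrictions into the fundamental domain of the governing diagonal flow. Let $g_t \in G$ denote the one-parameter subgroup used in the proof of Theorem \ref{maintheorem}, whose action on $K_S^d \cong K_S^m \times K_S^n$ preserves the product $\norm{\vb{x}}_\mathbf{a}\norm{\vb{y}}_\mathbf{b}$ and sends $\norm{\vb{y}}_\mathbf{b} \mapsto e^t\norm{\vb{y}}_\mathbf{b}$. Because the action on the $\vb{x}$-coordinates runs along the very one-parameter orbit $\{(e^{ta_{i\nu}}x_{i\nu})_{i,\nu}\}$ defining $\pi_\mathbf{a}(\vb{x})$, and similarly for $\vb{y}$, both weighted projections are $g_t$-invariant, so the restrictions $A, B$ are flow-invariant conditions. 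Writing $F(A,B) = E_{1,c}(A,B)$ we obtain a decomposition
\[
E_{T,c}(A,B) = \left(\bigsqcup_{k=0}^{\lfloor T \rfloor - 1} g_k F(A,B)\right) \cup R_T,
\]
with remainder $R_T$ of measure $O(1)$. Setting $\chi = \mathbbm{1}_{F(A,B)}$ and letting $\widehat{\chi}(\Lambda) = \sum_{\vb{v}\in\Lambda\setminus\{0\}}\chi(\vb{v})$ be its Siegel transform, this yields
\[
\#(E_{T,c}(A,B) \cap \Lambda) = \sum_{k=0}^{\lfloor T\rfloor - 1}\widehat{\chi}(g_{-k}\Lambda) + O(1).
\]

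The $k=1$ case of Theorem \ref{Rogers3} is Siegel's formula $\int_X \widehat{\chi}\,d\mu = \lambda(F(A,B))$; combined with $g_t$-invariance of $\lambda$, this gives $\lambda(E_{T,c}(A,B)) = \lfloor T \rfloor \int_X \widehat{\chi}\,d\mu + O(1)$. Hence the theorem reduces to the effective ergodic estimate
\[
\sum_{k=0}^{\lfloor T\rfloor - 1}\widehat{\chi}(g_{-k}\Lambda) - \lfloor T \rfloor \int_X \widehat{\chi}\,d\mu = o\!\left(T^{1/2}(\log T)^{3/2}(\log\log T)^{1/2+\epsilon}\right)
\]
for $\mu$-almost every $\Lambda$, which is precisely the output of Corollary \ref{gaposhkinCorr} applied to the $g_1$-action on $(X,\mu)$. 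The analytic hypotheses needed for Gaposhkin's theorem -- an $L^2(X,\mu)$-bound on $\widehat{\chi}$ together with sufficient decay of the correlations $\int_X \widehat{\chi}(\Lambda) \widehat{\chi}(g_{-k}\Lambda)\,d\mu$ -- follow from the $k=2$ case of Theorem \ref{Rogers3}. Expanding $\widehat{\chi}^2$ and integrating produces a finite sum indexed by $D \in \mathcal{D}_{1,2}$ of Lebesgue integrals of $\chi(\vb{v})\chi(\vb{v} \cdot \iota_S(D))$; since $F(A,B) \subseteq F$ where $F$ is the fundamental domain used in Theorem \ref{maintheorem}, each term is bounded uniformly in $A$ and $B$ by the corresponding estimate from the unrestricted case.

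The conclusion for $\Lambda_\vartheta$ with almost every $\vartheta \in \textnormal{Mat}_{m\times n}(K_S)$ then follows by a Fubini argument, pulling the null exceptional set $N \subset X$ back under the smooth embedding $\vartheta \mapsto \Lambda_\vartheta$ to a Lebesgue-null subset of $\textnormal{Mat}_{m\times n}(K_S)$. The principal technical obstacle is verifying the moment estimates with uniformity in $A$ and $B$: the Siegel transform $\widehat{\chi}$ is unbounded near the cusps of $X$, and the terms in Theorem \ref{Rogers3} coming from sublattices of small covolume require care to prevent the implicit constants from blowing up as $A$ or $B$ shrink. The measure-zero boundary assumption on $A$ and $B$ enters only in a standard approximation step, sandwiching $\chi$ between indicators of slightly smaller and larger sets whose boundary contributions vanish faster than the asserted $o(\cdot)$ error.
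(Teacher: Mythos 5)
Your proposal follows essentially the same route as the paper: both exploit the $g_t$-invariance of the weighted projections $\pi_\mathbf{a},\pi_\mathbf{b}$ so that the second-moment bound for $\widehat{\mathbbm{1}}_{E_{T,c}(A,B)}$ reduces to the unrestricted case of Theorem \ref{maintheorem} via Theorem \ref{Rogers3} (using $\mathbbm{1}_{E_{T,c}(A,B)}\leq \mathbbm{1}_{E_{T,c}}$ and $\lambda(E_{T,c}(A,B))=\lambda(E_{T,c})\,\mathrm{vol}(A)\,\mathrm{vol}(B)$), after which Corollary \ref{gaposhkinCorr} gives the error term. The only substantive deviation is your discrete-time decomposition with a remainder $R_T$ declared to be $O(1)$: the lattice-point count $\#(R_T\cap\Lambda)$ is not literally $O(1)$ (only $\lambda(R_T)$ is) and would need a separate Chebyshev/Borel--Cantelli patch, whereas the paper stays in continuous time and controls this step with the sandwich inequality (3.2) of Alam--Ghosh.
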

	Note that each of these results only hold in sufficiently high dimensions. In particular, the case $k=2$ of Theorem \ref{Rogers3}, which we use to prove Theorems \ref{maintheorem} and \ref{spiralling}, only hold for $d\geq 3$. To obtain formulas for these higher moments (or lower dimensions), different techniques are needed; some results in this direction are described by Kelmer and Yu \cite{kelmerSecondMomentSiegel2019} and Kleinbock and Yu \cite{kleinbockDynamicalBorelCantelliLemma2020}.
	\section{Higher Moment Formulas for Number Fields}
	In this section we will prove theorem \ref{Rogers3}, which describes the integral 
	\begin{equation*}
		\int_X \sum_{\vb{x}_1,\dots,\vb{x}_k\in \Lambda} h(\vb{x}_1,\dots,\vb{x}_k)\,d\mu(\Lambda)
	\end{equation*}
	in terms of integrals over $(K_S^d)^r$ where $0\leq r \leq k\leq d-1$ and $h\in L^1((K_S^d)^k)$ is non-negative. By setting $h= \prod \widehat{f}$, we recover an expression for higher moments of $\widehat{f}$, which we will use to prove the effective counting and spiralling results.
	\subsection{Linear Independence of Lattice Points}
	For a lattice $\Lambda \in X$, let $1 \leq k < d$ and 
	\begin{equation*}
		\Lambda^k = \left\{ (\vb{x}_1,\dots,\vb{x}_k)\, : \, \vb{x}_i \in \Lambda \right\}
	\end{equation*}
	Suppose $\vb{V}=\left(\vb{x}_1,\dots,\vb{x}_k\right) \in \Lambda^k$  are vectors with $K$-rank $r$. Let $1 \leq j_1, \dots, j_r \leq k$ be the indeces of a maximal subset of vectors $\vb{x}_{j_i}$ that is $K$-linearly independent. For all other indeces $j \neq j_i$, let $s_j = \max_{i}\{j_i \,:\, j_i \leq j \}$ and write $\vb{x}_j = \sum_{i=1}^{s_j} k_{j,i} \vb{x}_{j_i}$. Following the notation of \cite{hanRogersMeanValue2022}, to $\vb{V}$ we associate a matrix $D \in \text{Mat}_{r\times k}(K)$ such that
	\begin{enumerate}
		\item\label{conds} $([D]^{j_1},\dots,[D]^{j_r}) = \text{Id}_r$,
		\item\label{conds2} for any $j \neq j_i$, $1\leq i \leq r$, $[D]^j = (k_{j,1},\dots,k_{j,s_j},0\dots,0)^\text{T}$, $k_{j,l}\in K$,
	\end{enumerate}
	where $[D]^j$ denotes the $j^\text{th}$ column of $D$. The matrix $D$ acts on $r$-tuples of linearly independent sets of vectors to give $k$-tuples of linearly dependent vectors. Define the set 
	\begin{equation*}
		\Phi_\Lambda(D) = \left\{ \vb{V}=\left(\vb{x}_1,\dots,\vb{x}_r\right)\in\Lambda_0^r \, : \, \text{rank}(\vb{V})=r,\, \vb{V}\cdot \iota_S(D) \in \Lambda_0^k \right\}
	\end{equation*}
	Let $\mathcal{D}_{r, k}$ be the set of all rank $r$ row-reduced echelon matrices in $\text{Mat}_{r\times k}(K)$. Let us also define, for $c \in K^\times$,
	\begin{equation*}
		N(c) = \prod_{\nu\in S}|\iota_\nu(c)|^{d_\nu}
	\end{equation*}
	Multiplication by $c$ on $K_S^d$ then corresponds to a linear transformation of determinant $N(c)^d$.
	\begin{lemma}\label{lattice}
		For any $D \in \mathcal{D}_{r,k}$, $\Phi_\Lambda(D)$ is a lattice in $\left(K_S^d\right)^r$ and $\textnormal{covol}(\Phi_{\Lambda}(D)) = \textnormal{covol}(\Phi_{\Lambda_0}(D))$ for all $\Lambda \in X$. In particular, if $D=\left(1,\tfrac{p}{q}\right)\in \mathcal{D}_{1,2}$ with $p,q$ coprime, then $\textnormal{covol}(\Phi_{\Lambda}(D)) = N(q)^d$.
	\end{lemma}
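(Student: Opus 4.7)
The plan is to sandwich $\Phi_\Lambda(D)$ between two visible sublattices of $\Lambda^r$, transport the covolume computation from an arbitrary $\Lambda$ to $\Lambda_0$ via the $G$-action, and then compute directly in the special case $D=(1,p/q)$. A preliminary remark: the rank condition in the definition excludes the $K$-linearly dependent tuples, which form a countable union of proper sublattices of $\Lambda^r$; adjoining them produces a genuine subgroup without affecting any integrals, so we shall understand $\Phi_\Lambda(D)$ as this subgroup throughout the proof.

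Since the entries of $D\in\mathcal{D}_{r,k}$ lie in $K$, clearing denominators yields $q_0\in\mathcal{O}_K\setminus\{0\}$ with $q_0D\in\textnormal{Mat}_{r\times k}(\mathcal{O}_K)$. Because $\Lambda$ is stable under $\iota_S(\mathcal{O}_K)$-scaling, every $\vb{V}=q_0\vb{W}\in q_0\Lambda^r$ satisfies
\begin{equation*}
\vb{V}\cdot \iota_S(D)=\vb{W}\cdot\iota_S(q_0D)\in\Lambda^k,
\end{equation*}
so that $q_0\Lambda^r\subseteq\Phi_\Lambda(D)\subseteq\Lambda^r$. Since $\iota_S(q_0)$ is invertible in $K_S$, both bracketing subgroups are full-rank lattices in $(K_S^d)^r$ with finite index $[\Lambda^r:q_0\Lambda^r]=N(q_0)^{rd}$, so $\Phi_\Lambda(D)$ itself is a lattice of finite index in $\Lambda^r$.

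For the invariance under changing $\Lambda$, write $\Lambda=g\Lambda_0$ with $g\in G$. Each $g_\nu\in\SL_d(K_\nu)$ acts on $K_\nu^d$ as a $K_\nu$-linear map of determinant $1$, so $g$ acts on $K_S^d$ as a volume-preserving $K_S$-linear map. As this left action commutes with right multiplication by $\iota_S(D)$, we obtain $\Phi_\Lambda(D)=g^{(r)}\Phi_{\Lambda_0}(D)$, where $g^{(r)}$ is the diagonal action on $(K_S^d)^r$; since $g^{(r)}$ preserves volume, $\textnormal{covol}(\Phi_\Lambda(D))=\textnormal{covol}(\Phi_{\Lambda_0}(D))$.

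Finally, for $D=(1,p/q)\in\mathcal{D}_{1,2}$ with $p,q\in\mathcal{O}_K$ coprime, $r=1$ and the condition $(\vb{x}_1,\iota_S(p/q)\vb{x}_1)\in\Lambda_0^2$ collapses to $\iota_S(p/q)\vb{x}_1\in\Lambda_0$. Writing $\vb{x}_1=\iota_S(\alpha)$ with $\alpha\in\mathcal{O}_K^d$, coprimality forces $q\mid\alpha_i$ coordinatewise, so $\Phi_{\Lambda_0}(D)=\iota_S(q\mathcal{O}_K^d)=q\Lambda_0$. Multiplication by $\iota_S(q)$ on $K_S$ has determinant $N(q)$, hence on $K_S^d$ has determinant $N(q)^d$, giving $\textnormal{covol}(\Phi_{\Lambda_0}(D))=N(q)^d\cdot\textnormal{covol}(\Lambda_0)=N(q)^d$. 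The chief subtlety is the handling of the rank condition noted at the outset; beyond that, the argument reduces to a clean exercise with the $\iota_S(\mathcal{O}_K)$-module structure of $\Lambda$ and the $G$-invariance of Haar measure on $K_S^d$.
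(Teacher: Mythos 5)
Your proof is correct and follows essentially the same route as the paper's: sandwiching $\Phi_\Lambda(D)$ between $q_0\Lambda^r$ (the paper uses an $\alpha$ with $\alpha\mathfrak{k}\lhd\mathcal{O}_K$, which is the same denominator-clearing device) and $\Lambda^r$, transporting covolumes via the determinant-one action of $g$, and using coprimality of $p,q$ to identify $\Phi(1,p/q)$ with $q\Lambda$. Your preliminary remark about adjoining the lower-rank tuples so that $\Phi_\Lambda(D)$ is literally a subgroup is a point the paper passes over silently, and it is handled correctly.
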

	\begin{proof}
		Clearly $\Phi_\Lambda(D)$ is an additive subgroup of $\Lambda^r$. For each entry $k_{i,j}\neq0,1$ of $D$, let $\left(k_{i,j}\right)\subset K$ be the fractional ideal generated by $k_{i,j}$ and define $\mathfrak{k}=\sum_{k_{i,j}\neq0,1} \left(k_{i,j}\right)$. There exists $\alpha \in \mathcal{O}_K$ such that $\alpha \mathfrak{k} \lhd \mathcal{O}_K$. We then have $\left(\alpha\Lambda^d\right)^r \leq \Phi_\Lambda(D) \leq \left(\Lambda^d\right)^r$, which shows $\Phi_\Lambda(D)$ is a lattice. Since $\Lambda = g\Lambda_0$ for some $g\in G$ with $\det g_\nu = 1$ for all $\nu\in S$, we must have $\textnormal{covol}(\Phi_{\Lambda}(D)) = \textnormal{covol}(\Phi_{\Lambda}(D))$.
	
	Given ${D}=(1,\tfrac{p}{q})\in \mathcal{D}_{1,2}$ where $p,q\in \mathcal{O}_K$ are coprime, the ideal $\mathfrak{k} = \left(\frac{p}{q}\right)$ is principal and
	\begin{equation*}
		\Phi_\Lambda({D}) = \left\{\vb{x} \in \Lambda \, : \, \tfrac{p}{q}\cdot\vb{x} \in \Lambda\right\}
	\end{equation*}
	Clearly $q\cdot\Lambda \subset \Phi_\Lambda({D})$. Conversely, $\vb{x} \in \Phi_\Lambda({D})$ implies $p\cdot\vb{x} \in q\cdot\Lambda$, and since $p,q$ are coprime, $\vb{x} \in q\cdot\Lambda$. Therefore $q\cdot\Lambda = \Phi_\Lambda({D})$ and $\text{covol}_{\Phi_\Lambda}(D) = N(q)^d$.
	\end{proof}	
	We can write
	\begin{equation}\label{decomp}
		\Lambda^k = \bigsqcup_{r=0}^k\left\{\vb{V} \in \Lambda^k \,:\, \text{rank}(\vb{V}) = r\right\}
	\end{equation}
	Clearly for $r=0$ the set comprises of only $\{(0,\dots,0)\}$ and the case $r=k$ is the set of all elements in $\Lambda^k$ that are linearly independent in $\Lambda$. We may then rewrite equation (\ref{decomp}) as
	\begin{gather}
		\Lambda^k = \{(\vb{0},\dots, \vb{0})\} \sqcup \{(\vb{x}_1,\dots,\vb{x}_k)\in \Lambda^k\,\, \text{lin. ind.}\} \nonumber\\
		\sqcup \bigsqcup_{r=1}^{k-1} \bigsqcup_{D \in \mathcal{D}_{r,k}} \left\{(\vb{x}_1,\dots,\vb{x}_r)\cdot\iota_S(D) \, : \, (\vb{x}_1,\dots,\vb{x}_r) \in \Phi_\Lambda(D)\right\}\label{decomp2}
	\end{gather}
	\subsection{Higher Moment Mean Value Formulas on $K_S^d$}
	A theorem of Weil implies the following mean value formula.
	\begin{theorem}[{{{Weil \cite{weilQuelquesResultatsSiegel1946}}}}]
		\label{Weil}
		Let $f\in L^1(K_S^d)$ be non-negative. Then
		\begin{equation*}
			\int_{X} \widehat{f}(\Lambda) \,d\mu(\Lambda) = \int_{K_S^d} f(\vb{x}) \,d\lambda(\vb{x})
		\end{equation*}
	\end{theorem}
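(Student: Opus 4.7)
The plan is to execute the classical Siegel--Weil argument adapted to $K_S^d$. Writing $\widehat{f}(\Lambda) := \sum_{\vb{x}\in\Lambda\setminus\{0\}}f(\vb{x})$ for the Siegel transform (which matches the absence of an $f(0)$ term on the right-hand side, in contrast with the $k=1$ specialisation of Theorem \ref{Rogers3}), I would consider the linear functional $I(f) := \int_X \widehat{f}(\Lambda)\,d\mu(\Lambda)$ on non-negative $f \in C_c(K_S^d)$. Since $\widehat{f}$ is locally finite for compactly supported $f$, $I$ defines a positive Radon measure $\tilde{\mu}$ on $K_S^d\setminus\{0\}$, and the theorem reduces to the identification $\tilde{\mu} = \lambda$; the extension to non-negative $f\in L^1$ then follows by monotone convergence.

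First I would check that $\tilde{\mu}$ is $G$-invariant. For $g\in G$ and $(g\cdot f)(\vb{x}) := f(g^{-1}\vb{x})$, a direct computation gives $\widehat{g\cdot f}(\Lambda) = \widehat{f}(g^{-1}\Lambda)$, and the left $G$-invariance of $\mu$ yields $I(g\cdot f) = I(f)$. Since $d\geq 2$, each $\SL_d(K_\nu)$ acts transitively on $K_\nu^d\setminus\{0\}$, so $G$ acts transitively on the $\lambda$-conull subset $\prod_{\nu\in S}(K_\nu^d\setminus\{0\})$ of $K_S^d\setminus\{0\}$. Together with the unimodularity of the point-stabilisers, this forces $\tilde{\mu} = c\cdot\lambda$ for some constant $c > 0$.

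To pin down $c = 1$, I would use a dilation argument. For $t>0$ set $f_t(\vb{x}) := f(t\vb{x})$, so that $\int_{K_S^d} f_t\,d\lambda = t^{-d\deg K}\int_{K_S^d} f\,d\lambda$, while for any unimodular lattice $\Lambda$ the rescaled sum $t^{d\deg K}\widehat{f_t}(\Lambda)$ is a Riemann approximation to $\int f\,d\lambda$ and converges to this integral as $t\to 0$. Multiplying the identity $I(f_t) = c\int f_t\,d\lambda$ by $t^{d\deg K}$ and passing to the limit via dominated convergence then gives $c=1$.

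The main technical obstacle is justifying the passage to the limit in the dilation step: one needs an integrable envelope on $X$ for $t^{d\deg K}\widehat{f_t}(\Lambda)$, uniformly in small $t$. This can be arranged by a Minkowski-type bound on the Siegel transform of the indicator of a large ball in terms of the first successive minimum of $\Lambda$, combined with $\mu$-integrability of the inverse first minimum on $X$ (standard reduction theory over number fields). The transitivity-plus-uniqueness step is routine once one verifies unimodularity of the stabiliser in $G$ of a generic vector.
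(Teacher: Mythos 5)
The paper does not actually prove this statement: it is imported wholesale from Weil's article, so there is no in-text argument to compare yours against. Your outline is, in essence, Weil's own proof (positivity gives a Radon measure, $G$-invariance together with transitivity on a conull orbit and unimodularity of the point stabiliser identify it with $c\lambda$, and a dilation/Riemann-sum argument forces $c=1$), and the skeleton is sound. Your reading of $\widehat{f}$ as the sum over $\Lambda\setminus\{0\}$ is the right one, as the comparison with the $k=1$ case of Theorem \ref{Rogers3} confirms, and the transitivity and stabiliser-unimodularity checks go through for $d\geq 2$ exactly as you say.

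Two points need tightening. First, the inference from pointwise local finiteness of $\widehat{f}$ to $I$ being a positive Radon measure is not enough: finiteness of each $\widehat{f}(\Lambda)$ does not give $I(f)=\int_X\widehat{f}\,d\mu<\infty$, and the Riesz representation step requires the functional to be finite on $C_c$. The count $\#(\Lambda\cap C)$ for compact $C$ blows up in the cusp of $X$, so the reduction-theoretic $L^1$ bound on the Siegel transform (integrability over $X$ of the inverse products of successive minima, valid here because $d\geq 2$) is needed already to get the measure $\tilde{\mu}$ off the ground, not only for the dominated-convergence limit in the dilation step where you place it. Second, $G$ acts transitively only on the open orbit $\prod_{\nu\in S}\left(K_\nu^d\setminus\{0\}\right)$; to conclude $\tilde{\mu}=c\lambda$ on all of $K_S^d\setminus\{0\}$ you must also check that $\tilde{\mu}$ assigns no mass to the degenerate orbits where some $\nu$-component vanishes. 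This is true, but for a reason worth recording: every nonzero vector of a lattice $g\,\iota_S(\mathcal{O}_K)^d$ has the form $g\,\iota_S(w)$ with $w\in\mathcal{O}_K^d\setminus\{0\}$, and since each embedding $\iota_\nu$ is injective, every component $g_\nu\iota_\nu(w)$ is nonzero. With these two repairs the argument is complete and matches the classical proof the paper is citing.
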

	We wish to generalise this formula to sums over linearly independent sets of vectors in $K_S^d$. To do so, first we need to introduce some subspaces of $G$. Let $1 < r < d-1$ and define
	\begin{equation*}
		\mathcal{C}_r = \left\{ \begin{pmatrix}
			\text{Id} & A \\ 0 & B 
		\end{pmatrix} \,:\, A \in \text{Mat}_{r\times d-r}(K_S),\, B \in \SL_{d-r}(K_S) \right\}\subset G
	\end{equation*}
	We can decompose $\mathcal{C}_r = \mathcal{B}_r\times \mathcal{A}_r$, where
	\begin{equation*}
		\mathcal{A}_r = \left\{\begin{pmatrix}
			\text{Id} & A\\0 & \text{Id}
		\end{pmatrix} \,:\, A \in \text{Mat}_{r\times d-r}(K_S) \right\} 
	\end{equation*}  
	\begin{equation*}
		\mathcal{B}_r = \left\{ \begin{pmatrix}\text{Id} & 0 \\ 0 & B\end{pmatrix} \,: \, B \in \SL_{d-r}(K_S)\right\} 
	\end{equation*}
	In fact, $\mathcal{C}_r \cong \mathcal{B}_r \times \mathcal{A}_r$. Let $\nu_{\mathcal{A},r}$, $\nu_{\mathcal{B}_r}$ be Haar measures on $\mathcal{B}_r$ and $\mathcal{A}_r$ respectively. Since $\mathcal{B}_r \cong \SL_{d-r}(K_S)$, the measure $\nu_{\mathcal{B}_r}$ induces a probability measure on the quotient $\mathcal{B}_r/(\mathcal{B}_r \cap \Gamma)$, hence a finite volume fundamental domain $\mathcal{F}_{\mathcal{B}_r}\subset \mathcal{B}_r$ exists. Similarly, $\mathcal{A}_r / (\mathcal{A}_r\cap \Gamma) \cong K_S^{r(d-r)}/\iota_S(\mathcal{O}_K)^{r(d-r)}$, so a finite volume fundamental domain $\mathcal{F}_{\mathcal{A}_r}\subset \mathcal{A}_r$ exists for this space also.
	
	Let $\nu_\mathcal{C}$ be the product Haar measure on $\mathcal{C}_r$. Since $\mathcal{C}_r$ is isomorphic to the direct product $\mathcal{B}_r\times\mathcal{A}_r$, it is also unimodular so contains a lattice. The homeomorphism
	\begin{equation*}
		\mathcal{B}_r/(\mathcal{B}_r\cap \Gamma)\times \mathcal{A}_r/(\mathcal{A}_r\cap \Gamma) \leftrightarrow \left(\mathcal{B}_r\times\mathcal{A}_r \right)/\left(\left(\mathcal{B}_r\times\mathcal{A}_r\right)\cap \Gamma\right) \cong \mathcal{C}_r/(\mathcal{C}_r\cap \Gamma)
	\end{equation*}
	\begin{equation*}
		\left(b\left(\mathcal{B}_r \cap \Gamma\right),a\left(\mathcal{A}_r \cap \Gamma\right)\right)\mapsto ba\left(\mathcal{C}_r \cap \Gamma\right)
	\end{equation*}
	shows that $\mathcal{F}_{\mathcal{B}_r}\times\mathcal{F}_{\mathcal{A}_r}$ is homeomorphic to a fundamental domain of $\mathcal{C}_r/(\mathcal{C}_r\cap \Gamma)$, denoted $\mathcal{F}_{\mathcal{C}_r}$. The above homeomorphism is induced by the measure-preserving homeomorphism $\mathcal{B}_r \times \mathcal{A}_r \rightarrow \mathcal{C}_r$, $(b,a)\mapsto ba$, hence $\nu_{\mathcal{C}_r}(\mathcal{F}_{\mathcal{C}_r}) =\nu_{\mathcal{A}_r}(\mathcal{F}_{\mathcal{A}_r})\nu_{\mathcal{B}_r}(\mathcal{F}_{\mathcal{B}_r}) = 1$, where $\nu_{\mathcal{C}_r}$ is the product measure on $\mathcal{B}_r\times\mathcal{A}_r$.
	
	The following theorem is a generalisation of Theorem \ref{Weil}. The strategy closely follows the proof given by Schmidt for the case $K=\Q$ \cite{schmidtMittelwerteBerGitter1957}. The proof relies on induction, where we transform the linearly independent vectors so that the first $k-1$ vectors lie in a particular subspace of $K_S^d$. The properties of the transformations in $\mathcal{C}_r$ acting on these vectors will then recover the mean value properties we require.
	\begin{theorem}\label{linind}
		Let $f\in L^1(\left(K_S^d\right)^k)$ be non-negative. Then
		\begin{equation*}
			\int_X \sum_{\stackrel{\vb{v}_1,\dots,\vb{v}_k\in\Lambda}{\textnormal{lin.ind.}}}f(\vb{v}_1,\dots,\vb{v}_k)\,d\mu(\Lambda) = \underbrace{\int_{K_S^d}\cdots \int_{K_S^d}}_{k\text{ times}} f(\vb{x}_1,\dots,\vb{x}_k)\,d\lambda(\vb{x}_1)\dots d\lambda(\vb{x}_k)
		\end{equation*}
	\end{theorem}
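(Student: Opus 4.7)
My approach is induction on $k$, following Schmidt's strategy from the rational case. The base case $k=1$ is immediate from Theorem \ref{Weil}: a single vector is $K$-linearly independent precisely when it is nonzero, so the left side is Weil's average $\int_X \widehat{f}(\Lambda)\,d\mu$, which equals $\int_{K_S^d} f\,d\lambda$ as required.

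For the inductive step, assume the identity for $k-1$; by monotone convergence it suffices to handle bounded, compactly supported, non-negative $f$ on $(K_S^d)^k$. The geometric heart of the argument is that $\mathcal{C}_{k-1}$ is exactly the stabilizer in $G$ of the partial frame $(\iota_S(e_1),\dots,\iota_S(e_{k-1}))$ of standard basis vectors. Consequently, any linearly independent $k$-tuple $(\vb{v}_1,\dots,\vb{v}_k)$ in a lattice $\Lambda \in X$ can be written as $g\cdot(\iota_S(e_1),\dots,\iota_S(e_{k-1}),\vb{w})$ with $g \in G$ determined up to right multiplication by $\mathcal{C}_{k-1}$, and $\vb{w}\in K_S^d$ outside $\mathrm{span}_{K_S}(e_1,\dots,e_{k-1})$.

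The unfolding step then changes variables from $(\Lambda,\vb{v}_1,\dots,\vb{v}_k)$ to $(g\mathcal{C}_{k-1},h,\vb{w})$ with $h\in\mathcal{F}_{\mathcal{C}_{k-1}}$, and uses the product decomposition $\mathcal{C}_{k-1}=\mathcal{B}_{k-1}\mathcal{A}_{k-1}$ together with the normalization $\nu_{\mathcal{C}_{k-1}}(\mathcal{F}_{\mathcal{C}_{k-1}})=1$ that the paragraph before the theorem has just established. Integrating the $\mathcal{B}_{k-1}$-variable, which acts as $\SL_{d-k+1}(K_S)$ on the last $d-k+1$ coordinates of $\vb{w}$, converts the sum over $\vb{v}_k$ into a $\lambda$-integral over $K_S^{d-k+1}$ via Theorem \ref{Weil} applied in dimension $d-k+1$; integrating the $\mathcal{A}_{k-1}$-variable, acting unipotently by translations of $\vb{w}$ inside $\mathrm{span}(e_1,\dots,e_{k-1})$ modulo $\iota_S(\mathcal{O}_K)^{k-1}$, completes the integration to all of $K_S^d$. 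What remains is the linear-independence sum in the first $k-1$ vectors against the marginal $F(\vb{v}_1,\dots,\vb{v}_{k-1})=\int_{K_S^d} f(\vb{v}_1,\dots,\vb{v}_{k-1},\vb{v}_k)\,d\lambda(\vb{v}_k)$, and the inductive hypothesis finishes the proof.

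The main obstacle is measure-theoretic bookkeeping: I have to verify that the Haar decomposition along the fibration $G \to G/\mathcal{C}_{k-1}$ is compatible with the chosen normalizations (probability measure on $X$, unit covolume of $\iota_S(\mathcal{O}_K)$ under $\lambda$, and unit covolumes of the fundamental domains $\mathcal{F}_{\mathcal{A}_{k-1}}$, $\mathcal{F}_{\mathcal{B}_{k-1}}$), so that the change of variables produces no stray constants and the identity emerges with the pure product measure $\lambda^k$ on the right. The number-field nature of the problem enters only through Weil's formula and through the structure of $\iota_S(\mathcal{O}_K)^{r(d-r)}$ as a lattice in $\mathcal{A}_{k-1}$; once these normalizations are pinned down, the combinatorial unfolding is formally the same as in the $K=\mathbb{Q}$ case.
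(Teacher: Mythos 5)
Your proposal is correct and follows essentially the same route as the paper: induction on $k$ with Weil's formula as the base case, and an inductive step that exploits $\mathcal{C}_{k-1}$ as the stabilizer of the first $k-1$ vectors, integrating over $\mathcal{F}_{\mathcal{B}_{k-1}}\times\mathcal{F}_{\mathcal{A}_{k-1}}$ to turn the sum over the last vector into $\int_{K_S^d}(\cdot)\,d\lambda$ and then invoking the inductive hypothesis on the marginal. The paper phrases this as averaging over $c\in\mathcal{F}_{\mathcal{C}_r}$ after moving a fixed $\vb{Z}_r$ into the coordinate subspace $E$ and using $G$-invariance of $\mu$, rather than your explicit change of variables along $G\to G/\mathcal{C}_{k-1}$, but the two are the same unfolding.
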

	\begin{proof}
		The case $k=1$ is Theorem \ref{Weil}. We proceed by induction. Suppose the theorem is true for $r<d-2$. Let
		\begin{equation*}
			\mathcal{V}_{r+1}(\Lambda) = \left\{\vb{V} \in \Lambda^{r+1} \,:\, \text{rank}\left(\vb{V}\right)=r+1\right\}
		\end{equation*}
		and for $\vb{V}_{r} \in \mathcal{V}_{r}(\Lambda)$, let
		\begin{equation*}
			L_\Lambda(\vb{V}_{r}) = \left\{\vb{v}\in \Lambda\,:\,\text{rank}\left(\vb{V}_{r},\vb{v}\right)=r+1\right\}
		\end{equation*}
		When no confusion arises, we will denote this set by $L_\Lambda$. Then we calculate
		\begin{gather*}
			\int_X \sum_{\vb{v} \in \mathcal{V}_{r+1}(\Lambda)}f(\vb{v})\,d\mu(\Lambda) \\
			= \int_X \sum_{\vb{V}_{r}\in \mathcal{V}_{r}(\Lambda)} \sum_{{\vb{v}\in L_\Lambda(\vb{V}_{r})}}f(\vb{V}_{r},\vb{v})\,d\mu(\Lambda) \\
			= \int_{\mathcal{F}_X} \sum_{\vb{Z}_{r}\in\mathcal{V}_{r}(\Lambda_0)} \sum_{\vb{z} \in L_{\Lambda_0}(\vb{Z}_{r})} f(g\vb{Z}_{r},g\vb{z})\,d\mu(g) \\
			= \sum_{\vb{Z}_{r}\in\mathcal{V}_{r}(\Lambda_0)}\int_{\mathcal{F}_X}  \sum_{\vb{z} \in L_{\Lambda_0}(\vb{Z}_{r})} f(g\vb{Z}_{r},g\vb{z})\,d\mu(g)
		\end{gather*}
		where $\mathcal{F}_X$ is a fundamental domain for $X$ in $G$. Given linearly independent $\vb{Z}_r \in \mathcal{V}_r(\Lambda_0)$, there exists a matrix $u \in \mathcal{C}_{r}$ such that $\text{span}_K(u\vb{Z}_r) = K_S^{r} \times \{\vb{0}\}^{d-r}=:E$. Given such a transformation $u$, if $\vb{Z}_r,\vb{z}$ are linearly independent, then $u\vb{z} \notin E$. Since $\mu$ is $G$-invariant, for fixed $\vb{Z}_r$ and $u$ as above,
		\begin{gather*}
			\int_{\mathcal{F}_X} \sum_{\vb{z} \in L_{\Lambda_0}} f(g\vb{Z}_r,g\vb{z})\,d\mu(g) \\
			=\int_{\mathcal{F}_X} \sum_{\vb{z} \in L_{\Lambda_0}} f(gu\vb{Z}_r,gu\vb{z})\,d\mu(g) \\
			=\int_{\mathcal{F}_X} \sum_{\vb{z} \notin E} f(gu\vb{Z}_r,g\vb{z})\,d\mu(g) 
		\end{gather*}
		Let $c \in \mathcal{C}_{r}$. Then $cu\vb{Z}_r = u\vb{Z}_r$. Again, since $\mu$ is $G$-invariant,
		\begin{gather*}
			\int_{\mathcal{F}_X} \sum_{\vb{z} \notin E} f(gu\vb{Z}_r,g\vb{z})\,d\mu(g) \\
			= \int_{\mathcal{F}_X} \sum_{\vb{z} \notin E} f(gcu\vb{Z}_r,gc\vb{z})\,d\mu(g)  \\
			=\int_{\mathcal{F}_X} \sum_{\vb{z} \notin E} f(gu\vb{Z}_r,gc\vb{z})\,d\mu(g) 
		\end{gather*}
		Write $c=ba$, where $a\in\mathcal{A}_r$, $b\in\mathcal{B}_r$. Then
		\begin{gather*}
			\int_{\mathcal{F}_X} \sum_{\vb{z} \in L_{\Lambda_0}} f(g\vb{Z}_r,g\vb{z})\,d\mu(g) \\
			= \int_{\mathcal{F}_{\mathcal{C}_{r}}} \int_{\mathcal{F}_X} \sum_{\vb{z} \in L_{\Lambda_0}} f(g\vb{Z}_r,g\vb{z})\,d\mu(g) \, d\nu_{\mathcal{C}_{r}}(c) \\
			= \int_{\mathcal{F}_{\mathcal{C}_{r}}}\int_{\mathcal{F}_X} \sum_{\vb{z} \notin E} f(gu\vb{Z}_r,gc\vb{z})\,d\mu(g) \,d\nu_{\mathcal{C}_{r}}(c)\\
			= \int_{\mathcal{F}_{\mathcal{A}_{r}}} \int_{\mathcal{F}_{\mathcal{B}_{r}}}  \int_{\mathcal{F}_X} \sum_{\vb{z} \notin E} f(gu\vb{Z}_r,gba\vb{z})\,d\mu(g)   \,d\nu_{\mathcal{B}_{r}}(b)\,d\nu_{\mathcal{A}_{r}}(a) \\
			\int_{\mathcal{F}_X} \int_{\mathcal{F}_{\mathcal{B}_{r}}} \int_{\mathcal{F}_{\mathcal{A}_{r}}}    \sum_{\vb{z} \notin E} f(gu\vb{Z}_r,gba\vb{z}) \,d\nu_{\mathcal{A}_{r}}(a) \,d\nu_{\mathcal{B}_{r}}(b) \,d\mu(g) 
		\end{gather*} 
		where the last equality is by Fubini's Theorem. 
		Notice that
		\begin{equation*}
			a\vb{z} = \begin{pmatrix}
				\text{Id} & A \\ 0 & \text{Id}
			\end{pmatrix} \begin{pmatrix}
				(\mathbf{z}_i)_{i=1}^{r} \\ (\mathbf{z}_i)_{i=r+1}^d
			\end{pmatrix} = \begin{pmatrix}
				(\mathbf{z}_i)_{i=1}^{r} + A(\mathbf{z}_i)_{i=1}^{r} \\
				(\mathbf{z}_i)_{i=r+1}^d
			\end{pmatrix}
		\end{equation*}
		Therefore for any integrable function $h:K_S^d\rightarrow \R_{\geq 0}$,
		\begin{gather*}
			\int_{\mathcal{F}_\mathcal{A}} \sum_{\vb{z}\notin E}h(a \vb{z}) \,d\nu_{\mathcal{A}_{r}} (a)\\ = \int_{K_S}\cdots\int_{K_S}\sum_{(\mathbf{z}_{r+1},\dots,\mathbf{z}_d)\neq \vb{0}} h(\mathbf{x}_1,\dots,\mathbf{x}_{r},\mathbf{z}_{r+1} ,\dots,\mathbf{z}_d)\, d\lambda_{K_S}(\mathbf{x}_1)\dots d\lambda_{K_S}(\mathbf{x}_{r})
		\end{gather*}
		and by Theorem \ref{Weil},
		\begin{equation*}
			\int_{\mathcal{F}_\mathcal{B}}\int_{\mathcal{F}_\mathcal{A}} \sum_{\vb{z}\notin E}h(ba\vb{z})d\nu_{\mathcal{A}_{r}}(a) d\nu_{\mathcal{B}_{r}}(b) = \int_{K_S^d}h(\vb{x})\,d\lambda(\vb{x})
		\end{equation*}
		Therefore
		\begin{gather*}
			\int_{\mathcal{F}_X} \int_{\mathcal{F}_{\mathcal{B}_{r}}} \int_{\mathcal{F}_{\mathcal{A}_{r}}}    \sum_{\vb{z} \notin E} f(gu\vb{Z}_r,gba\vb{z}) \,d\nu_{\mathcal{A}_{r}}(a) \,d\nu_{\mathcal{B}_{r}}(b) \,d\mu(g) \\
			=\int_{\mathcal{F}_X} \int_{K_S^d}  f(gu\vb{Z}_r,g\vb{x})\,d\lambda(\vb{x})  \,d\mu(g) 
		\end{gather*}
		The theorem  follows by induction.
	\end{proof}
	
	We can now prove Theorem \ref{Rogers3}, a number field analogue of Rogers' Theorem. Again, the result can be obtained in a similar manner to Schmidt's proof of Rogers' Theorem for the case $K=\Q$ \cite{schmidtMittelwerteBerGitter1957}.
	
	\begin{proof}[proof of Theorem \ref{Rogers3}]
		
		By equation (\ref{decomp2}), we can write
		\begin{gather}
			\int_{X} \sum_{\vb{x}_1,\dots,\vb{x}_k \in \Lambda }f(\vb{x}_1,\dots,\vb{x}_k)d\mu = f(0,\dots,0) + \int_{X} \sum_{\substack{\vb{x}_1,\dots,\vb{x}_k \in \Lambda \\ \text{lin. ind.}}} f(\vb{x}_1,\dots,\vb{x}_k) d\mu\nonumber \\
			+ \sum_{r=1}^{k-1}\sum_{D \in \mathcal{D}_{r,k}} \int_{X} \sum_{(\vb{x}_1,\dots,\vb{x}_r) \in \Phi_\Lambda(D)} f\left( (\vb{x}_1,\dots,\vb{x}_r)\cdot\iota_S(D) \right) d\mu \label{partial}
		\end{gather}
		By Lemma \ref{lattice}, each set $\Phi_\Lambda(D)$ defines a lattice in $(K_S^r)^d$. For any $D \in \mathcal{D}_{r,k}$, write $\bar{f}\left(\vb{z}_1,\dots,\vb{z}_r\right):=f\left((\vb{z}_1,\dots,\vb{z}_r)\cdot\iota_S(D)\right)$. Then by Theorem \ref{linind}, 
		\begin{gather*}
			\int_{\mathcal{F}_X} \sum_{\vb{z}_1,\dots,\vb{z}_r \in \Phi_{g\Lambda_0}(D)} \bar{f}\left(\vb{z}_1,\dots,\vb{z}_r\right)\,d\mu(g)\\
			= \frac{1}{\text{covol}(\Phi_{\Lambda}(D))}\int_{K_S^d}\cdots \int_{K_S^d} \bar{f}\left(\vb{x}_1,\dots,\vb{x}_r\right) \,d\lambda(\vb{x}_1)\dots d\lambda(\vb{x}_r) \\
			= \frac{1}{\text{covol}(\Phi(D))} \int_{K_S^d}\cdots \int_{K_S^d} f\left((\vb{x}_1,\dots,\vb{x}_r)\cdot \iota_S(D)\right) \,d\lambda(\vb{x}_1)\dots d\lambda(\vb{x}_r) 
		\end{gather*}
		This combined with equation (\ref{partial}) gives the result.
	\end{proof}
	
	\section{Effective Diophantine Approximation In Number Fields}
	In this section we will prove Theorem \ref{maintheorem} and Theorem \ref{spiralling}. First we recall Gaposhkin's Effective Ergodic Theorem and one of its corollaries. The proof of these theorems then follows from a calculation using Theorem \ref{Rogers3}. 
	\begin{theorem}[{{\cite[Theorem 1]{gaposhkinEstimatesMeansAlmost1979}}}]
		Let $\xi:X\times\R\rightarrow \C$ be a mean-square continuous stationary processes on a measure space $(X,\mu)$ such that
		\begin{equation*}
			\int_X \xi(x,t)\,d\mu(x) = \mu(\xi), \quad \int_X |\xi(x,t)|^2\,d\mu(x) >0, 
		\end{equation*}
		Define \begin{equation*}
			R^*(\tau) = \int_X \left(\frac{1}{\tau}\int_0^\tau \xi(x,t)\,dt\right)^2\,d\mu(x) - \mu(\xi)^2
		\end{equation*}
		Then for any monotonically increasing function $\Psi:(1,\infty)\rightarrow \R_{\geq 0}$ such that $\int_1^\infty \left(t\Psi(t)\right)^{-1}\,dt<\infty$, and for $\mu$-almost every $x \in X$,
		\begin{equation*}
			\frac{1}{T}\int_0^T \xi(x,t) \,dt = \mu(\xi) + o\left(\sqrt{\frac{\Psi(T)}{T}}\int_1^T \sqrt{\frac{R^*(\tau)}{\tau}}\,d\tau\right)
		\end{equation*}
	\end{theorem}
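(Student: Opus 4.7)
The plan is to combine a variance calculation with a Borel--Cantelli argument along a dyadic sequence of times, and to interpolate between those times using a Rademacher--Menshov--style maximal inequality. This is the standard template for effective ergodic theorems, and the integral $\int_1^T \sqrt{R^*(\tau)/\tau}\,d\tau$ appearing in the target rate is the signature of the Rademacher--Menshov contribution.

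First I would reduce to the centred case by replacing $\xi$ with $\xi - \mu(\xi)$. The Cesaro average $M_T(x) := T^{-1}\int_0^T \xi(x,t)\,dt$ then satisfies $\norm{M_T}_{L^2(\mu)}^2 = R^*(T)$ directly from the definition of $R^*$. Picking a dyadic sequence $T_n = 2^n$, Chebyshev's inequality gives
\begin{equation*}
    \mu\bigl(\{x : |M_{T_n}(x)| > \delta_n\}\bigr) \leq \delta_n^{-2}\, R^*(T_n)
\end{equation*}
for any threshold $\delta_n > 0$. The hypothesis $\int_1^\infty (t\Psi(t))^{-1}\,dt < \infty$ is exactly the condition needed to choose $\delta_n$ so that $\sum_n \delta_n^{-2} R^*(T_n) < \infty$ while still matching the target rate, whereupon Borel--Cantelli yields $|M_{T_n}(x)| = o(\delta_n)$ for $\mu$-almost every $x$ along the dyadic subsequence.

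To pass from the dyadic sequence to an arbitrary $T \in [T_n, T_{n+1}]$, I would apply a Rademacher--Menshov maximal inequality to the partial-sum process $N(T) := \int_0^T \xi(x,t)\,dt$ on a sufficiently fine subdivision of $[T_n, T_{n+1}]$, using mean-square continuity of $\xi$ to replace integrals by Riemann sums without loss. The maximal inequality then controls $\sup_{T \in [T_n, T_{n+1}]} |M_T - M_{T_n}|$ by an expression of the form $\int_{T_n}^{T_{n+1}} \sqrt{R^*(\tau)/\tau}\,d\tau$, and telescoping over $n$ recovers the full integral $\int_1^T \sqrt{R^*(\tau)/\tau}\,d\tau$ in the target rate.

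The hard part will be calibrating the thresholds $\delta_n$ against the Rademacher--Menshov constant, which typically introduces extraneous logarithmic factors that must be absorbed into $\Psi$ without degrading the precise rate. The monotonicity of $\Psi$ and its integrability are exactly what permit this balancing to close. Once these pieces are in place, combining the Borel--Cantelli estimate along the dyadic sequence with the interpolation bound delivers the claimed little-$o$ asymptotic for $\mu$-almost every $x$.
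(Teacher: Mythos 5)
First, a point of comparison: the paper does not prove this statement at all --- it is quoted verbatim from Gaposhkin's 1979 paper and used as a black box --- so there is no in-paper argument to measure your proposal against. Your outline does match the broad template behind results of this type: a variance identity for the Cesaro average, Chebyshev plus Borel--Cantelli along a lacunary sequence of times, and a maximal inequality to interpolate between them, with $\int_1^\infty (t\Psi(t))^{-1}\,dt<\infty$ supplying convergence of the Borel--Cantelli series over dyadic scales (via $\sum_n \Psi(2^n)^{-1}<\infty$). The identity $\lVert M_T\rVert_{L^2(\mu)}^2=R^*(T)$ for the centred process is also correct by definition.

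The genuine gap is in the interpolation step. The Rademacher--Menshov inequality controls maxima of partial sums of \emph{orthogonal} random variables, and the time increments $\int_{t_k}^{t_{k+1}}\xi(x,t)\,dt$ of a wide-sense stationary process are not orthogonal: stationarity gives $\int_X\bigl|\int_s^t\xi(x,u)\,du\bigr|^2d\mu=(t-s)^2R^*(t-s)$ for each single increment but says nothing about the cross terms, and $(t-s)^2R^*(t-s)$ need not admit a superadditive majorant of the same order, which is what a M\'oricz-type substitute for orthogonality would require. The standard way to repair this --- and the route Gaposhkin takes --- is to obtain orthogonality in the spectral domain rather than in time: write the centred process as $\int e^{i\lambda t}\,dZ(\lambda)$ with $Z$ orthogonally scattered, so that $R^*(\tau)=\int\bigl|\tfrac{e^{i\lambda\tau}-1}{i\lambda\tau}\bigr|^2\,dF(\lambda)$, decompose over dyadic frequency bands, and convert the spectral mass of each band into values of $R^*$; the factor $\int_1^T\sqrt{R^*(\tau)/\tau}\,d\tau$ then arises from summing over bands, not from a maximal inequality over time subdivisions. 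Relatedly, the calibration of the thresholds $\delta_n$ against the maximal-inequality loss, which you flag as ``the hard part'' and defer, is precisely where the specific shape of the rate is produced, so as written the argument does not close.
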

	Using this theorem, Gaposhkin calculated various error rates for processes with $R^*$ asymptotically shrinking with $T^{-\alpha}\left(\log T\right)^{-\beta}\left(\log \log T\right)^{-\gamma}$, one of which is written below. 
	\begin{corollary}[{{\cite[Theorem 4(vii)]{gaposhkinDependenceConvergenceRate1982}}}]\label{gaposhkinCorr}
		Suppose $R^*(\tau) = \frac{1}{\tau}$. Then for any $\epsilon >0$,
		\begin{equation*}
			\frac{1}{T}\int_0^T \xi(x,t)\,dt = \mu(\xi) + o\left(T^{-\frac{1}{2}}\left(\log T\right)^{\frac{3}{2}}\left(\log \log T\right)^{\frac{1}{2}+\epsilon}\right)
		\end{equation*}
	\end{corollary}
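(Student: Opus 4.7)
The plan is to apply the theorem directly, choosing the auxiliary function $\Psi$ to match the target rate. Since $R^*(\tau) = 1/\tau$, the inner integral simplifies to
\begin{equation*}
\int_1^T \sqrt{\frac{R^*(\tau)}{\tau}}\,d\tau = \int_1^T \frac{d\tau}{\tau} = \log T,
\end{equation*}
so the error term in the theorem collapses to $o\!\left(T^{-1/2}\sqrt{\Psi(T)}\,\log T\right)$.

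Matching this with the claimed rate $T^{-1/2}(\log T)^{3/2}(\log\log T)^{1/2+\epsilon}$ forces $\sqrt{\Psi(T)}$ to behave like $(\log T)^{1/2}(\log\log T)^{1/2+\epsilon}$, which suggests the canonical choice
\begin{equation*}
\Psi(t) = (\log t)(\log\log t)^{1+2\epsilon}
\end{equation*}
for sufficiently large $t$, extended in any convenient monotone, strictly positive manner on the initial interval where the iterated logarithm misbehaves (for instance by pinning $\Psi$ to a positive constant on $(1,e^e]$ and taking a continuous monotone interpolation).

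Verifying the hypothesis $\int_1^\infty (t\Psi(t))^{-1}\,dt < \infty$ is then a standard iterated logarithm calculation: on $[e^e,\infty)$ the substitution $u = \log\log t$ converts the tail integral into $\int_1^\infty u^{-1-2\epsilon}\,du$, which is finite for every $\epsilon > 0$, while on $(1,e^e]$ the integrand is bounded. Feeding this $\Psi$ into Gaposhkin's conclusion then delivers exactly the stated rate. No step presents a genuine obstacle; the only mild subtlety is arranging $\Psi$ to be defined, positive, and monotone on all of $(1,\infty)$, and the prescription above handles that.
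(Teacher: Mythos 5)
Your derivation is correct and complete: with $R^*(\tau)=1/\tau$ the integral $\int_1^T\sqrt{R^*(\tau)/\tau}\,d\tau=\log T$, the choice $\Psi(t)=(\log t)(\log\log t)^{1+2\epsilon}$ (suitably regularised near $t=1$) is monotone and satisfies $\int_1^\infty(t\Psi(t))^{-1}\,dt<\infty$ via the substitution $u=\log\log t$, and substituting back yields exactly the claimed rate $o\bigl(T^{-1/2}(\log T)^{3/2}(\log\log T)^{1/2+\epsilon}\bigr)$. Note that the paper does not prove this corollary at all --- it is quoted directly from Gaposhkin \cite{gaposhkinDependenceConvergenceRate1982} --- but your argument is precisely the standard deduction of that result from the theorem stated immediately before it, so there is nothing to object to.
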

	We remark that the iterated logarithms in Corollary \ref{gaposhkinCorr} can be extended arbitrarily far and we restrict to two iterations to simplify notation. We will apply this result to the process $\widehat{\mathbbm{1}}_{E_{T,c}}\circ g_t$, where 
	\begin{equation*}
		g_t = (\text{diag}(e^{a_{1\nu}t},\dots,e^{a_{1\nu}t},e^{-b_{m\nu}t}))_{\nu\in S}
	\end{equation*}
	By Moore's Ergodicity Theorem, this flow is ergodic on $K_S^d$ with respect to $\mu$. 
	
	The main difficulty in applying Theorem \ref{Rogers3} to the approximation problem (\ref{Dioph}) is that the error term involves summation over all elements in the number field $K$. To calculate this term, we first approximate the measure of the set $\lambda(E_{T,c}\cap \gamma^{-1}E_{T,c})$ for $\gamma \in K^\times$, then decompose the sum over $K^\times$ to a sum over all units, principal integral ideals $(q)$ and finally principal integral ideals $(p)$ so that $(\tfrac{p}{q})$ is principal. Such a decomposition allows us to make use of the Dirichlet Unit Theorem and the Dedekind--Weber Theorem to approximate the sum. This technique was used by Schanuel \cite{Schanuel1979} to effectively count the number of points in projective space of bounded height relative to $K$ and in a similar context by Kim \cite{kimAdelicRogersIntegral2022} for volumes of spheres and annuli in $K_S^d$.
	
	\subsection{Counting Units}
	First, we will prove a small combinatorial result.
	\begin{lemma}\label{Zcount}
		Let $n\geq 1$, $M,N \geq 0$ be integers, and
		\begin{equation*}
			\mathcal{Z}_{M,N}=\left\{ (z_i)\in \Z^n\,:\, \sum_{i}\max(0,z_i)=M, \, \sum_{i}\min(0,z_i)=-N \right\} 
		\end{equation*}
		Then $\#\mathcal{Z}_{M,N} = O\left(((M+N)^n\right)$.
	\end{lemma}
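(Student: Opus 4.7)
The plan is to avoid counting compositions directly and instead reduce the problem to a coordinatewise bound, which is enough for the crude $O((M+N)^n)$ estimate sought.

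First, I would observe that for any tuple $(z_1,\dots,z_n)\in\mathcal{Z}_{M,N}$, the positive coordinates are individually bounded: if $z_i\geq 0$, then
\begin{equation*}
z_i = \max(0,z_i) \leq \sum_{j=1}^n \max(0,z_j) = M,
\end{equation*}
because every other $\max(0,z_j)$ is non-negative. A symmetric argument gives $-z_i\leq N$ whenever $z_i\leq 0$. Hence every coordinate of an element of $\mathcal{Z}_{M,N}$ lies in the integer interval $\{-N,-N+1,\dots,M-1,M\}$, which has cardinality $M+N+1$.

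Second, I would conclude by the trivial product bound
\begin{equation*}
\#\mathcal{Z}_{M,N} \leq (M+N+1)^n = O\bigl((M+N)^n\bigr),
\end{equation*}
where the implied constant depends only on $n$ (and we interpret the edge case $M=N=0$ by noting $\mathcal{Z}_{0,0}=\{\mathbf{0}\}$).

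There is no serious obstacle here; the main point is simply recognising that the constraints $\sum_i\max(0,z_i)=M$ and $\sum_i\min(0,z_i)=-N$ immediately force each $z_i$ to lie in a box of side $M+N+1$, after which a product bound suffices. A more refined estimate could be obtained by partitioning on the signature $(a,b)$ of positive and negative coordinates and counting compositions via $\binom{M-1}{a-1}\binom{N-1}{b-1}$, but this is unnecessary for the order-of-magnitude bound claimed.
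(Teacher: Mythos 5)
Your proof is correct and reaches the stated bound by a genuinely different and more elementary route than the paper. The paper's proof decomposes a tuple according to how many coordinates are positive and how many are negative, counts the resulting integer partitions of $M$ and $N$ using the Knessl--Keller asymptotic $p(k,M)=O(M^{k-1})$, treats the cases $N=0$, $M=0$, $M=N$ and $M\neq N$ separately, and finally multiplies by $n!$ to account for orderings. Your observation that the constraints force every coordinate into the box $\{-N,\dots,M\}$, so that $\#\mathcal{Z}_{M,N}\leq (M+N+1)^n$, collapses all of this into one line and avoids the partition asymptotics entirely. What the paper's finer decomposition buys is a sharper exponent: counting compositions as you note in your last remark gives roughly $O_n\bigl((M+N)^{n-1}\bigr)$ rather than $(M+N)^n$; but since the lemma only asserts the exponent $n$, and the application (Lemma \ref{UnitBound}) only needs polynomial growth in $M+N$ against the exponentially decaying factors $e^{-M}$ and $e^{w-N}$, your cruder bound is fully sufficient. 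One pedantic point, which you handle more honestly than the paper does: at $M=N=0$ the set is $\{\mathbf{0}\}$, not empty, so the bound $O\bigl((M+N)^n\bigr)$ must be read asymptotically (or with $(M+N+1)^n$ on the right); this affects nothing downstream.
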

	\begin{proof}
		Clearly $\#\mathcal{Z}_{M,N}=0$ for $M=N=0$, so suppose at least one of these quantities is non-zero. For now, we will count elements $(z_i) = (x_1,\dots,x_k,0,\dots,y_1,\dots,y_l)$ with $z_i\leq z_{i+1}$ and $k+l \leq n$ such that $(x_i)$ is a partition of $M$ and $(y_i)$ is a partition of $N$. Let $p(k,M)$ equal the number of integer partitions of $M$ into $k$ integers, up to ordering. It was shown by Knessl and Keller in \cite{KnesslKeller} that
		\begin{equation*}
			p(k,M) \sim \frac{1}{2\pi M} e^{2k} \left(\frac{M}{k^2}\right)^k = O\left(M^{k-1}\right)
		\end{equation*}
		If $N=0$, then there exists $C>0$ such that
		\begin{gather*}
			\#\mathcal{Z}_{M,0} = \sum_{k=1}^n \tfrac{(\#S-1)!}{(\#S-1-k)!}p_k(M) 
			\leq C\sum_{k=1}^n \tfrac{(\#S-1)!}{(\#S-1-k)!}M^{k-1} \left(\tfrac{e}{k}\right)^{2k} 
			\leq C M^{n-1}
		\end{gather*}
		Similarly, if $M=0$, then $\#\mathcal{Z}_{0,N}= O(N^{n-1})$. If $M=N\neq 0$, then
		\begin{gather*}
			\#\mathcal{Z}_{M,M} = \sum_{k+l=2}^n p_k(M)p_l(M) 
			=\sum_{k=1}^{n-1} p_k(M) \sum_{l=1}^{n-k} p_l(M) \\
			\leq C \sum_{k=1}^{n-1} M^{k-1} M^{n-k} 
			\leq C nM^{n-1}
		\end{gather*}
		If $M\neq N$, both non-zero, then
		\begin{gather*}
			\#\mathcal{Z}_{M,N} = \sum_{k+l=2}^n p_k(M)p_l(N) 
			= \sum_{k=1}^{n-1} \left(p_k(M)\sum_{l=1}^{n-k} p_l(N) \right) \\
			\leq C \sum_{k=1}^{n-1} M^{k-1} N^{n-k} 
			\leq C (M+N)^{n}
		\end{gather*}
		
		Finally, in each case there are at most $n!$ permutations of each vector contained in $\mathcal{Z}_{M,N}$, so $\#\mathcal{Z}_{M,N}\leq Cn!(M+N)^n = O_n\left((M+N)^n\right)$.
	\end{proof}
	Define the sets
	\begin{equation*}
		F_{T,c} = \left\{ (\vb{x},\vb{y})\in K_S^m \times K_S^n\,:\, \norm{\vb{x}}_\mathbf{a}\norm{\vb{y}}_{\mathbf{b}}<c,\, \norm{\vb{y}}_\mathbf{b}<e^T \right\}
	\end{equation*}
	\begin{equation*}
		F_{T,c}^\nu = \left\{ (\vb{x}_\nu,\vb{y}_\nu)\in K_\nu^m \times K_\nu^n\,:\, \norm{\vb{x}_\nu}_{\mathbf{a}_\nu}\norm{\vb{y}_\nu}_{\mathbf{b}_\nu}<c,\, \norm{\vb{y}_\nu}_{\mathbf{b}_\nu}<e^T \right\}
	\end{equation*}
	where $\norm{\vb{x}_\nu}_{\mathbf{a}_\nu} = \max_{1\leq i \leq m}|x_{i\nu}|^{\frac{1}{a_{i\nu}}}$ and $\norm{\vb{y}_\nu}_{\mathbf{b}_\nu} = \max_{1\leq j \leq n}|y_{j\nu}|^{\frac{1}{b_{j\nu}}}$. Using elementary set operations, we see that
	\begin{align*}
		E_{T,c}\cap \alpha E_{T,c} 
		&= \left(F_{T,c}\setminus F_{0,c}\right) \cap \left(\alpha F_{T,c}\setminus \alpha F_{0,c}\right) \\
		&= \left(\prod_{\nu\in S} F_{T,c}^\nu \setminus \prod_{\nu\in S} F_{0,c}^\nu\right) \cap\left( \prod_{\nu\in S} \alpha F_{0,c}^\nu \setminus \prod_{\nu\in S} \alpha F_{0,c}^\nu\right) \\
		&=\prod_{\nu\in S}\left( F_{T,c}^\nu \cap \iota_\nu(\alpha) F_{T,c}^\nu \right)\setminus \prod_{\nu\in S} \left(F_{0,c}^\nu \cup \iota_\nu(\alpha) F_{0,c}^\nu\right) \\
		&= \prod_{\nu\in S} \min(1,|\iota_\nu(\alpha)|)F_{T,c}^\nu \setminus \prod_{\nu\in S} \max(1,|\iota_\nu(\alpha)|)F_{0,c}^\nu \\
		&\subseteq \prod_{\nu\in S} m_\nu(\alpha)F_{T,c}^\nu \setminus \prod_{\nu\in S}m_\nu(\alpha)F_{0,c}^\nu \\
		&= (m_\nu(\alpha))_{\nu\in S} \cdot \left(\prod_{\nu\in S} F_{T,c}^\nu \setminus \prod_{\nu\in S}F_{0,c}^\nu \right) \\
		&= (m_\nu(\alpha))_{\nu\in S} \cdot E_{T,c}
	\end{align*}	
	where $m_\nu(\alpha) = \min(1,|\iota_\nu(\alpha)|)$ and $(m_\nu(\alpha))_{\nu\in S}$ acts on $K_S^d$ via $(m_\nu(\alpha))\cdot (z_{i\nu})_{i,\nu} = (m_\nu(\alpha)z_{i_\nu})_{i\nu}$. Note that we can interpret $(m_\nu(\alpha))_{\nu\in S}$ as a linear transformation on $K_S^d$ with determinant $\prod_{\nu\in S}|m_\nu(\alpha)|^{d_\nu}$. 
	
	Using this and Lemma \ref{Zcount}, we can now give a bound for a sum over $\mathcal{O}_K^\times$, which will be used in the proof of theorem \ref{maintheorem}.
	\begin{lemma}\label{UnitBound}
		For any $\gamma \in K^\times$, there exists a constant $C>0$ such that
		\begin{equation*}
			\sum_{u\in \mathcal{O}_K^\times}\int_{K_S^d}\mathbbm{1}_{E_{T,c}}(\vb{z})\mathbbm{1}_{E_{T,c}}(u\gamma\cdot \vb{z})\,d\lambda(\vb{z})>0
		\end{equation*}
		is less than or equal to $C$.
	\end{lemma}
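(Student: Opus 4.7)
The plan is to apply the inclusion $E_{T,c}\cap \alpha E_{T,c}\subseteq (m_\nu(\alpha))_{\nu\in S}\cdot E_{T,c}$ derived in the paragraph preceding the statement (with $\alpha=(u\gamma)^{-1}$) to bound each integral in the sum, and then to establish convergence of the resulting sum over units by passing to the logarithmic embedding of $\mathcal{O}_K^\times$ and invoking Dirichlet's unit theorem. The main obstacle is the last step, where an infinite sum indexed by $\mathcal{O}_K^\times$ must be turned into a convergent lattice-point series.

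A direct rewriting gives
\begin{equation*}
\int_{K_S^d}\mathbbm{1}_{E_{T,c}}(\vb{z})\mathbbm{1}_{E_{T,c}}(u\gamma\vb{z})\,d\lambda(\vb{z})=\lambda\bigl(E_{T,c}\cap(u\gamma)^{-1}E_{T,c}\bigr).
\end{equation*}
Applying the derived containment with $\alpha=(u\gamma)^{-1}\in K^\times$ and recalling that $(m_\nu(\alpha))_{\nu\in S}$ acts as a linear transformation on $K_S^d$ with absolute Jacobian $\prod_\nu m_\nu(\alpha)^{d\cdot d_\nu}$, each integral is at most $\lambda(E_{T,c})\prod_\nu\max(1,|\iota_\nu(u\gamma)|)^{-d\cdot d_\nu}$, since $m_\nu((u\gamma)^{-1})=1/\max(1,|\iota_\nu(u\gamma)|)$. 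The problem therefore reduces to verifying that $\sum_{u\in\mathcal{O}_K^\times}\prod_\nu\max(1,|\iota_\nu(u\gamma)|)^{-d\cdot d_\nu}$ is finite.

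To handle this, introduce the logarithmic map $L(x)=(d_\nu\log|\iota_\nu(x)|)_{\nu\in S}\in\R^{\#S}$. Using $N(u)=1$, one has $\sum_\nu L_\nu(u\gamma)=\log N(\gamma)$, and splitting each coordinate into positive and negative parts yields the identity $\sum_\nu L_\nu(u\gamma)^+=\tfrac{1}{2}(\|L(u\gamma)\|_1+\log N(\gamma))$. Each summand is therefore equal to
\begin{equation*}
\exp\bigl(-d\textstyle\sum_\nu L_\nu(u\gamma)^+\bigr)=N(\gamma)^{-d/2}\exp\bigl(-\tfrac{d}{2}\|L(u\gamma)\|_1\bigr).
\end{equation*}
By Dirichlet's unit theorem, $L(\mathcal{O}_K^\times)$ is a finite union (one coset per root of unity in $K$) of translates of a lattice of rank $\#S-1$ sitting inside the hyperplane $\{y\in\R^{\#S}:\sum_\nu y_\nu=0\}$. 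Hence $L(u\gamma)$ ranges over a discrete subset of rank $\#S-1$ in the affine hyperplane $\{\sum_\nu y_\nu=\log N(\gamma)\}$. Choosing a $\Z$-basis of the free part turns the remaining sum into a geometric-type series in the basis coordinates, which converges because $\|L(u\gamma)\|_1$ grows at least linearly in the $\ell^\infty$-norm of those coordinates; call the resulting value $C_1=C_1(K,\gamma)$.

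Combining these steps yields the bound $C=\lambda(E_{T,c})\cdot N(\gamma)^{-d/2}\cdot C_1(K,\gamma)<\infty$, depending on $T$, $c$, and $\gamma$. The crux of the argument is that the log-lattice has rank exactly $\#S-1$, strictly less than the ambient dimension $\#S$: the hyperplane constraint forbids the units from tending to zero in every place simultaneously, which is precisely what allows the exponential decay to overcome lattice-point growth. As an alternative to parametrising by a basis, one could invoke Lemma \ref{Zcount} to count units in log-shells prescribed by their positive and negative part sums, bound each shell's contribution by $O((M+N)^{\#S-1})\exp(-\tfrac{d}{2}(M+N-\log N(\gamma)))$, and sum over $M,N$.
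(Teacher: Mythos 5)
Your argument is correct and lands on the same bound $C=O_{K,\gamma}(\lambda(E_{T,c}))$ as the paper, and it shares the paper's skeleton: control each term via the containment $E_{T,c}\cap\alpha E_{T,c}\subseteq(m_\nu(\alpha))_{\nu\in S}\cdot E_{T,c}$, pass to the logarithmic embedding, and invoke Dirichlet's Unit Theorem. Where you genuinely diverge is in how convergence of the unit sum is established. The paper partitions $\R^{\#S}$ into unit cubes, bounds the number of units whose logarithm lands in a given cube by a constant using Ball's cube-slicing theorem together with a lattice-point counting result of Widmer, and then sums over the admissible cubes by sorting them into shells indexed by the positive and negative parts of their coordinates --- which is precisely what Lemma \ref{Zcount} is built to handle. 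You instead use the identity $\sum_\nu L_\nu(u\gamma)^+=\tfrac12\left(\norm{L(u\gamma)}_1+\log N(\gamma)\right)$ to rewrite each summand as $\lambda(E_{T,c})\,N(\gamma)^{-d/2}\exp\left(-\tfrac{d}{2}\norm{L(u\gamma)}_1\right)$, after which convergence is immediate: an exponentially decaying function summed over a translate of a rank-$(\#S-1)$ lattice converges. This buys a shorter and more elementary proof, with no cube decomposition, no cube-slicing theorem, no external counting input, and no need for Lemma \ref{Zcount} (your closing remark correctly identifies the shell decomposition as the route the paper actually takes). Two cosmetic points: all roots of unity map to $0$ under $L$, so $L(\mathcal{O}_K^\times)$ is a single rank-$(\#S-1)$ lattice counted with multiplicity $w_K$ rather than a union of distinct cosets; and your exponent $\prod_\nu\max(1,|\iota_\nu(u\gamma)|)^{-d\cdot d_\nu}$ is the sharp Jacobian, where the paper contents itself with a weaker first-power bound --- either suffices, since the discrepancy is a factor of $N(\gamma)^{\pm}$ that is absorbed into the constant.
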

	\begin{proof}
		The units $\mathcal{O}_K^\times$ map to $\mathcal{L}=\{(z_\nu)\in \R^{\#S}\,:\,\sum_{\nu\in S}z_\nu=0\}$ via the function
		\begin{equation*}
			\text{Log}(u)=\left(d_\nu\log|\iota_\nu(u)|\right)_{\nu\in S}
		\end{equation*}
		By Dirichlet's Unit Theorem, the image of $\mathcal{O}_K^\times / \xi_K$ under $\text{Log}$ is a full rank lattice in $\mathcal{L}$, where $\xi_K$ is the maximal torsion group of $\mathcal{O}_K$. Partition $\R^{\#S}$ with unit $\#S$-cubes $Q(v_1,\dots,v_{\#S})$, indexed by their maximal vertex (for example, $[0,1]^{\#S} = Q(1,\dots,1)$). For each $u \in \mathcal{O}_K^\times$, $\text{Log}(\gamma u) \in Q(v_1,\dots,v_{\#S})$ for some $v_i \in \Z$, and
		\begin{gather*}
			\lambda\left(E_{T,c}\cap (\gamma u)^{-1}E_{T,c}\right) \leq \prod_{\nu\in S}|m_\nu(u\gamma)| \lambda(E_{T,c}) \\
			\leq \lambda(E_{T,c}) \prod_{\nu\in S} \min(1,e^{d_\nu v_\nu})
		\end{gather*}
		If $Q(v_1,\dots,v_{\#S})\cap \mathcal{L}\neq \emptyset$ then there exists a point $(y_\nu)_{\nu\in S}\in Q$ such that $\sum_{\nu\in S}y_\nu + \log N\gamma =0$, hence $\sum_{\nu\in S}v_\nu \geq \sum_{\nu\in S}y_\nu \geq -\log N\gamma$. Additionally, $Q(v_1-1,\dots,v_{\#S}-1)\cap \mathcal{L}=\emptyset$, so $\sum_{\nu\in S}v_\nu \leq \#S - \log N\gamma$. Let 
		\begin{equation*}
			\mathcal{U}_\gamma = \left\{ (v_\nu)_{\nu\in S}\in \Z^{\#S} \,:\, - \log N \gamma \leq \sum v_\nu \leq \#S-\log N \gamma \right\}
		\end{equation*}
		By definition, the set $\mathcal{U}_\gamma$ contains all unit cubes in $\R^{\#S}$ that intersect with the hyperplane containing $\text{Log}(\gamma\mathcal{O}_K^\times)$. Ball's Cube Slicing Theorem states that $\lambda_{\#S-1}(\mathcal{L}\cap Q)\leq \sqrt{2}$ for any cube $Q((v_\nu)_{\nu\in S})$ with $(v_\nu)_{\nu\in S} \in \mathcal{U}_\gamma$ \cite{ballCubeSlicingInRn}, so using a lattice counting argument, for example \cite[Theorem 5.4]{widmerCountingPrimitivePoints2010}, we can bound
		\begin{gather*}
			\#(\text{Log}(\mathcal{O}_K^\times)\cap Q) \leq \frac{\sqrt{2}}{R_K} + O_K(1)
		\end{gather*}
		where $R_K$ is the regulator of $K$. Then
		\begin{gather*}
			\sum_{u \in \mathcal{O}_K^\times} \int_{K_S^d}\mathbbm{1}_{E_{T,c}}(\vb{z})\mathbbm{1}_{E_{T,c}}(u\gamma\cdot \vb{z})\,d\lambda(\vb{z}) \\
			\leq C_K\lambda(E_{T,c}) \sum_{(v_\nu)\in \mathcal{U}_\gamma} \prod_{\nu\in S} \min(1,e^{d_\nu v_\nu})
		\end{gather*}
		for some constant $C_K>0$. Fix $\sigma \in S$, let $S_\sigma = S\setminus \{\sigma\}$ and decompose
		\begin{gather*}
			\Z^{\#S-1} = \bigsqcup_{M,N\in \Z^2} \mathcal{Z}_{M,N} \\:= \bigsqcup_{M,N\in \Z^2} \left\{(v_\nu)_{\nu\in S_\sigma}\in \Z^{\#S-1}\,:\, \sum_{\nu\in S_\sigma} \min(0, v_\nu)=-M, \sum_{\nu\in S_\sigma}\max(0, v_\nu)=N\right\}
		\end{gather*}
		Write $f(x) \lesssim g(x)$ if $f(x) = O(g(x))$ and set $w=\#S-\log N\gamma$. We have
		\begin{gather*}
			\sum_{(v_\nu)\in \mathcal{U}_\gamma} \prod_{\nu\in S} \min(1,e^{v_\nu}) \nonumber \\
			\leq \sum_{(v_\nu)\in S_\sigma} \sum_{v_\sigma \,:\, (v_\sigma,v_\nu)\in\mathcal{U}_\gamma} \prod_{\nu\in S} \min(1,e^{ v_\nu})\nonumber \\
			= \sum_{M=0}^\infty \sum_{N=0}^\infty \sum_{(v_\nu)\in \mathcal{Z}_{M,N}} \sum_{v_\sigma\,:\, (v_\sigma,v_\nu)\in \mathcal{U}_\gamma}  e^{-M} \min(1,e^{ v_\sigma})\nonumber \\
			\lesssim \sum_{M=0}^\infty \sum_{N=0}^\infty \sum_{(v_\nu)\in \mathcal{Z}_{M,N}}  e^{-M} \min(1,e^{w + M-N})\nonumber 
		\end{gather*}
		By Lemma \ref{Zcount}, $\#\mathcal{Z}_{M,N} = O\left((M+N)^{\#S-1}\right)$. Therefore
		\begin{gather} 
			\sum_{M=0}^\infty \sum_{N=0}^\infty \sum_{(v_\nu)\in \mathcal{Z}_{M,N}}  e^{-M} \min(1,e^{w + M-N})\nonumber\\
			\lesssim  \sum_{M=0}^\infty \sum_{N=0}^\infty (M+N)^{\#S-1}  \min(e^{-M},e^{w -N}\nonumber) \\
			=  \sum_{M=0}^\infty \left(\sum_{N=0}^\infty \sum_{k=1}^{\#S-1} \begin{pmatrix}\#S-1\nonumber \\ k\end{pmatrix} M^{\#S-1-k}N^k \min(e^{-M},e^{w -N}) \right)\nonumber \\
			\begin{gathered}\label{sum}
			=    \sum_{k=1}^{\#S-1}\begin{pmatrix}\#S-1 \\ k\end{pmatrix} M^{\#S-1-k} \left(\sum_{M=0}^\infty  e^{-M}\sum_{N=0}^{\lfloor w  \rfloor+ M }N^k\right) \\
			+ \sum_{k=1}^{\#S-1}\begin{pmatrix}\#S-1 \\ k\end{pmatrix}\left( \sum_{M=0}^\infty  M^{\#S-1-k}\sum_{N=\lceil  w \rceil+ M}^\infty N^k e^{w - N}  \right)
			\end{gathered}
		\end{gather}
		The first summand can be estimated as
		\begin{equation}\label{asym1}
		\begin{gathered}
			\sum_{M=0}^\infty M^{\#S-1-k}e^{-M}\sum_{N=0}^{\lfloor w\rfloor+M} N^k \leq \sum_{M=0}^\infty M^{\#S-1-k}e^{-M}(w +M)^{k+1} \\
			\lesssim \sum_{M=0}^\infty M^{\#S}e^{-M}
		\end{gathered}
	\end{equation}
		Similarly, the second summand is bounded by
		\begin{equation}\label{asym2}
		\begin{gathered}
			\sum_{M=0}^\infty\sum_{N=\lfloor w \rfloor+M}^\infty N^k e^{w -N} 
			\leq \sum_{M=0}^\infty (M+1)(w + M)^k e^{-M} \\
			\lesssim \sum_{M=0}^\infty M^{k+1}e^{-M}
		\end{gathered}
		\end{equation}
		Combining expressions (\ref{sum}), (\ref{asym1}), and (\ref{asym2}), we get
		\begin{gather*}
			\sum_{(v_\nu)\in \mathcal{U}_\gamma} \prod_{\nu\in S} \min(1,e^{v_\nu}) \\
			\lesssim \sum_{M=0}^\infty \sum_{k=1}^{\#S-1} \begin{pmatrix} \#S-1\\ k\end{pmatrix}e^{-M}(M^{k+1}+M^{\#S}) \\
			\lesssim  \sum_{M=0}^\infty e^{-M} M^{\#S} < \infty 
		\end{gather*}
		Since this sum is only dependent on $K$, the claim is proven.
	\end{proof}
	
	\subsection{Proof of Theorems \ref{maintheorem} and \ref{spiralling}}
	
	Before we prove the main theorems, we recall a result that counts the density of integral ideals with bounded norms in $\mathcal{O}_K$.
	\begin{theorem}[{{\cite[Theorem 121]{heckeLecturesTheoryAlgebraic1981}}}]\label{dedekindweber}
		Let $K$ be a number field. The number of principal ideals $I\lhd\mathcal{O}_K$ of norm less than or equal to $s$ is given by
		\begin{equation*}
			s \chi_K  + O(s^{1-\frac{1}{\deg K}})
		\end{equation*}
		where $\chi_K>0$ is constant.
	\end{theorem}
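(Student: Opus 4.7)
The plan is to reduce Theorem~\ref{dedekindweber} to a lattice point count via Dirichlet's Unit Theorem. First I would use the bijection between nonzero principal ideals $(\alpha) \lhd \mathcal{O}_K$ and $\mathcal{O}_K^\times$-orbits on $\mathcal{O}_K \setminus \{0\}$; since the ideal norm satisfies $N((\alpha)) = |N(\alpha)|$, counting such ideals of norm at most $s$ reduces to counting points of $\iota_S(\mathcal{O}_K) \cap \mathcal{F}$ with $|N(\cdot)| \leq s$, where $\mathcal{F} \subset K_S^\times$ is a fundamental domain for the multiplicative action of $\mathcal{O}_K^\times$ on $K_S^\times$.

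Next I would construct $\mathcal{F}$ explicitly. Using the map $\text{Log}(x) = (d_\nu \log |x_\nu|)_{\nu \in S}$ already used in Lemma~\ref{UnitBound}, Dirichlet's Unit Theorem gives $\text{Log}(\mathcal{O}_K^\times)$ as a full-rank lattice in $\mathcal{L} = \{v \in \R^{\#S} : \sum v_\nu = 0\}$. Choosing a fundamental parallelepiped for this lattice inside $\mathcal{L}$, pulling it back through $\text{Log}$, coning over $\R_{>0}$ in the norm direction, and splitting off a fundamental domain for the torsion subgroup $\xi_K$ in the remaining angular coordinates produces $\mathcal{F}$ as a positive cone satisfying
\begin{equation*}
    \mathcal{F} \cap \{x : |N(x)| \leq s\} = s^{1/d}\bigl(\mathcal{F} \cap \{x : |N(x)| \leq 1\}\bigr)
\end{equation*}
for $d = \deg K$.

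The final step is a lattice point count for dilates of the bounded region $R := \mathcal{F} \cap \{x : |N(x)| \leq 1\}$. By a standard principle of geometry of numbers (for instance \cite[Theorem 5.4]{widmerCountingPrimitivePoints2010}), if $\partial R$ admits a Lipschitz parametrization by a bounded subset of $\R^{d-1}$, then for a lattice $L \subset \R^d$ one has $\#(L \cap tR) = \text{vol}(tR)/\text{covol}(L) + O(t^{d-1})$ as $t \to \infty$. Applied with $L = \iota_S(\mathcal{O}_K)$ and $t = s^{1/d}$, this yields the main term $s\chi_K$ with $\chi_K = \text{vol}(R)/\text{covol}(\iota_S(\mathcal{O}_K)) > 0$ and an error of $O(s^{1-1/d})$, which is precisely the claimed asymptotic.

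The main obstacle is verifying the Lipschitz regularity of $\partial R$. The region is cut out jointly by the polynomial inequality $|N(x)| \leq 1$ and the unit-group inequalities in $\text{Log}$-coordinates, so its boundary decomposes into finitely many smooth pieces whose Lipschitz constants must be controlled uniformly; handling the singular locus where $|N(x)| = 0$ requires some care but does not affect the count since $\mathcal{O}_K \setminus \{0\} \subset K_S^\times$. This is the technical heart of the classical proofs of Hecke and Landau, and once it is in place the $s\chi_K + O(s^{1-1/d})$ asymptotic follows at once from the lattice point estimate.
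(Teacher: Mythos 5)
The paper does not prove this statement---it is imported verbatim as Theorem 121 of Hecke's \emph{Lectures on the Theory of Algebraic Numbers}---and your sketch is precisely the classical Hecke--Landau argument behind that citation: pass to unit orbits, build a fundamental cone via the logarithmic embedding and Dirichlet's Unit Theorem (absorbing the torsion $\xi_K$ into the domain so no factor of $w_K$ remains), exploit the homogeneity $N(tx)=t^{d}N(x)$, and invoke a Lipschitz-boundary lattice point count to get the main term $s\chi_K$ with error $O(s^{1-1/d})$. The outline is correct, and you rightly identify the only genuinely delicate point (Lipschitz parametrizability of $\partial R$ near the locus $N(x)=0$), so there is nothing to add beyond referring to the cited source for those details.
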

	This theorem will be used in tandem with the following lemma by Thunder to calculate a sufficient upper bound for some of the terms appearing in Theorem \ref{Rogers3}.
	\begin{lemma}[{{\cite[Lemma 12]{Thunder1992AnAE}}}]\label{thunder}
		Let $\mathcal{G}$ and $g:\mathcal{G}\rightarrow [1,\infty)$. Suppose 
		\begin{equation*}
			\textnormal{card}\{g^{-1}[1,x]\} = c_1x^{c_2}+O(x^{c_3})
		\end{equation*} 
		where $c_1,c_2,c_3>0$. Let $t\geq 1$ and $F\in C^1((0,t);\R)$ be such that $F'(x)\leq 0$ and $F'$ decreasing on $(0,t)$. Then
		\begin{equation*}
			\sum_{w \in \mathcal{G}} F(g(w))<\int_{\frac{1}{2}}^t x^{c_2-1}F(x)\,dx + 2^{-c_2}F\left(\tfrac{1}{2}\right)
		\end{equation*}
	\end{lemma}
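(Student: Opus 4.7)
The natural strategy is summation by parts. I would introduce the counting function
\[
N(x) := \#\{w\in\mathcal{G}: g(w)\leq x\},
\]
so by hypothesis $N(x) = c_1 x^{c_2} + O(x^{c_3})$ for $x\geq 1$, and $N(x)=0$ on $[0,1)$. Writing the sum as a Riemann--Stieltjes integral against $dN$ with lower limit $1/2$ (harmless because $N$ vanishes there) gives
\[
\sum_{w\in\mathcal{G},\,g(w)<t} F(g(w)) \;=\; \int_{1/2}^{t^{-}} F(x)\,dN(x).
\]

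The plan is then to integrate by parts twice. A first integration by parts, using $N(1/2)=0$, converts the right-hand side into
\[
\lim_{s\uparrow t} F(s)N(s) \;-\; \int_{1/2}^{t} F'(x)N(x)\,dx.
\]
Next I would substitute $N(x) = c_1 x^{c_2} + R(x)$ with $|R(x)| = O(x^{c_3})$, splitting the integral into a main term and an error term. A second integration by parts on the main term turns $-F'(x) x^{c_2}$ into the desired integrand (a multiple of) $x^{c_2-1}F(x)$, while the endpoint evaluation at $x=1/2$ produces the $2^{-c_2}F(1/2)$ contribution and the endpoint evaluation at $x=t$ cancels the limiting boundary term $\lim_{s\uparrow t}F(s)N(s)$ up to a controllable remainder.

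The remaining error integral $-\int_{1/2}^t F'(x)\,R(x)\,dx$ is the delicate piece. Here the two monotonicity hypotheses on $F$ are both used: $F'\leq 0$ makes the integrand sign-definite, and $F'$ decreasing (i.e.\ $F$ concave) lets one compare $|F'(x)|$ near $x=t$ to an integral of $F$, bounding the error by a quantity that is absorbed by the strict inequality. The main obstacle, and the reason concavity of $F$ enters at all, is precisely in controlling the boundary limit $\lim_{s\uparrow t}F(s)N(s)$, since $F$ may blow up as $x\uparrow t$; concavity combined with monotonicity is exactly what is needed to dominate this potential singularity by the integral $\int_{1/2}^t x^{c_2-1}F(x)\,dx$. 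This careful endpoint bookkeeping is the technical core of Thunder's argument in \cite{Thunder1992AnAE}, which I would follow in detail.
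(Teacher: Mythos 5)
First, a point of comparison: the paper offers no proof of Lemma~\ref{thunder} at all --- it is imported as a black box from \cite[Lemma 12]{Thunder1992AnAE} and used only once, in the proof of Theorem~\ref{maintheorem}, so there is no in-paper argument to measure yours against. Your choice of partial summation against the counting function $N(x)=\mathrm{card}\{g^{-1}[1,x]\}$ is the standard and correct skeleton for a statement of this shape, and your implicit restriction of the sum to $g(w)<t$ is the right repair of the statement as printed (with $F\equiv 1$ the left side is $\#\mathcal{G}$, which may be infinite, while the right side is finite).

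That said, what you have written is a plan, and the places where you defer are exactly the places where something must be checked. (i) Your second integration by parts produces the main term $c_1c_2\int x^{c_2-1}F(x)\,dx$, whereas the stated bound carries no constant in front of the integral; you need to either track where $c_1c_2$ goes or note explicitly that the statement as transcribed has shed its constants (harmless for the application in Theorem~\ref{maintheorem}, which only needs an $O(\cdot)$ bound, but a proof of the inequality as literally stated cannot ignore this). (ii) Your claim that ``$F'\leq 0$ makes the integrand sign-definite'' is true for $-\int F'N\,dx$ but false for the error piece $-\int_{1/2}^tF'(x)R(x)\,dx$: the remainder $R(x)=N(x)-c_1x^{c_2}=O(x^{c_3})$ has no fixed sign, so you can only bound this by $\int|F'(x)|x^{c_3}\,dx$, and nothing in the stated hypotheses (no relation between $c_2$ and $c_3$ is assumed, and $c_3$ never reappears in the conclusion) guarantees that this is dominated by $\int_{1/2}^tx^{c_2-1}F(x)\,dx$. (iii) The ``main obstacle'' you identify --- that $F$ may blow up as $x\uparrow t$ --- cannot occur: $F'\leq 0$ forces $F$ to be non-increasing, hence bounded above near $t$ by $F(1/2)$; the relevant singularity of the $F$ used in this paper, namely $F(x)=x^{-d/2}$, is at $x=0$, which is precisely why the right-hand side is anchored at $1/2$ and carries the term $2^{-c_2}F(1/2)$. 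Ending with ``I would follow Thunder's argument in detail'' leaves the load-bearing estimates unverified; as it stands the proposal has the right first move but does not constitute a proof.
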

	
	\begin{proof}[proof of Theorem \ref{maintheorem}]
		As shown before $\mathcal{D}_{1,2} = \left\{ (1,\gamma)\,:\,\gamma\in K^\times \right\}$, hence by Theorem \ref{Rogers3}, 
		\begin{gather*}
			\int_X \widehat{\mathbbm{1}}_{E_{T,c}}^2(\Lambda)d\mu(\Lambda) = \left(\int_{K_S^d} \mathbbm{1}_{E_{T,c}}(\vb{x})\,d\lambda(\vb{x})\right)^2 \\
			+  \sum_{c \in K^\times} \frac{1}{\text{covol}(\Phi((1,\gamma)))} \int_{K_S^d} \mathbbm{1}_{E_{T,c}}(\vb{x})\mathbbm{1}_{E_{T,c}}(\gamma\cdot \vb{x})\,d\lambda(\vb{x}) \label{eq1}
		\end{gather*}
		For $q \in \mathcal{O}_K$, let $\mathcal{E}_q$ be the set of all $(p) \lhd \mathcal{O}_K$ coprime to $(q)$ such that $(\gamma)=\left(\tfrac{p}{q}\right)$ is a proper principal ideal of norm less than $1$. Implicitly, we choose representatives $p$ of each ideal $(p)\in \mathcal{E}_q$ that maximise the integral above. Furthermore, since
		\begin{equation*}
			\int_{K_S^d} \mathbbm{1}_{E_{T,c}}(\vb{x})\mathbbm{1}_{E_{T,c}}(\gamma^{-1}\cdot \vb{x})\,d\lambda(\vb{x}) = N(\gamma)^{d} \int_{K_S^d} \mathbbm{1}_{E_{T,c}}(\vb{x})\mathbbm{1}_{E_{T,c}}(\gamma\cdot \vb{x})\,d\lambda(\vb{x}) 
		\end{equation*}
		for $\gamma=\tfrac{p}{q}$ with $N(\gamma)\leq1$, we can calculate
		\begin{align*}
			&\frac{1}{\text{covol}(\Phi(1,\gamma))}\int_{K_S^d} \mathbbm{1}_{E_{T,c}}(\vb{x})\mathbbm{1}_{E_{T,c}}(\gamma\cdot \vb{x})\,d\lambda(\vb{x}) \\
			&+ \frac{1}{\text{covol}(\Phi(1,\gamma^{-1}))}\int_{K_S^d} \mathbbm{1}_{E_{T,c}}(\vb{x})\mathbbm{1}_{E_{T,c}}(\gamma^{-1}\cdot \vb{x})\,d\lambda(\vb{x}) \\
			=&\frac{1}{N(q)^d}\int_{K_S^d} \mathbbm{1}_{E_{T,c}}(\vb{x})\mathbbm{1}_{E_{T,c}}(\tfrac{p}{q}\cdot \vb{x})\,d\lambda(\vb{x}) \\
			&+ \frac{1}{N(p)^d}\int_{K_S^d} \mathbbm{1}_{E_{T,c}}(\vb{x})\mathbbm{1}_{E_{T,c}}(\tfrac{q}{p}\cdot \vb{x})\,d\lambda(\vb{x}) \\
			=& \frac{2}{N(q)^{d}} \int_{K_S^d} \mathbbm{1}_{E_{T,c}}(\vb{x})\mathbbm{1}_{E_{T,c}}(\tfrac{p}{q}\cdot \vb{x})\,d\lambda(\vb{x}) 
		\end{align*}
		Using the above, Lemma \ref{UnitBound} and the Cauchy-Schwartz inequality, we can then bound (\ref{eq1}) by
		\begin{gather*}
			\sum_{(q)\lhd\mathcal{O}_K}\sum_{(p)\in \mathcal{E}_q} \sum_{u\in\mathcal{O}_K^\times} \frac{1}{N(q)^{d}}\int_{K_S^d} \mathbbm{1}_{E_{T,c}} (\vb{x})\mathbbm{1}_{E_{T,c}}\left(\tfrac{up}{q}\cdot\vb{x}\right)\,d\lambda(\vb{x}) \\
			\lesssim \lambda(E_{T,c})+\sum_{(q)\lhd\mathcal{O}_K}\sum_{{(p)\in \mathcal{E}_q}}\frac{1}{N(q)^{d}} \int_{K_S^d} \mathbbm{1}_{E_{T,c}} (\vb{x})\mathbbm{1}_{E_{T,c}}\left(\tfrac{p}{q}\cdot\vb{x}\right)\,d\lambda(\vb{x}) \\
			\leq \lambda(E_{T,c})+\lambda(E_{T,c})^\frac{1}{2}\sum_{(q)\lhd\mathcal{O}_K}\sum_{{(p)\in \mathcal{E}_q}}\frac{1}{N(q)^{d}} \left(\int_{K_S^d}\mathbbm{1}_{E_{T,c}}\left(\tfrac{p}{q}\cdot\vb{x}\right)\,d\lambda(\vb{x})\right)^\frac{1}{2} \\
			= \lambda(E_{T,c})\left(1+ \sum_{(q)\lhd\mathcal{O}_K}\sum_{\stackrel{(p)\in \mathcal{E}_q}{N(p)<N(q)}}\frac{1}{N(p)^{\frac{d}{2}}N(q)^{\frac{d}{2}}} \right)
		\end{gather*}
		By Theorem \ref{dedekindweber} and Lemma \ref{thunder}, 
		\begin{gather*}
			\lambda(E_{T,c})\left(1+ \sum_{(q)\lhd\mathcal{O}_K}\sum_{\stackrel{(p)\in \mathcal{E}_q}{N(p)<N(q)}}\frac{1}{N(p)^{\frac{d}{2}}N(q)^{\frac{d}{2}}}\right)  \\
			< \lambda(E_{T,c})\left(1+ \sum_{(q)\lhd \mathcal{O}_K} \left(\frac{1}{(1-\frac{d}{2})N(q)^{d-1}} + \frac{2^{\frac{d}{2}-1}}{N(q)^{\frac{d}{2}}(\frac{d}{2}-1)} + \frac{1}{N(q)^\frac{d}{2}}\right) \right)\\
			\leq \lambda(E_{T,c})\left(1+\frac{1}{1-\frac{d}{2}}\zeta_K(d-1) + \frac{2^{\frac{d}{2}-1}}{\frac{d}{2}-1}\zeta_K(\tfrac{d}{2}) + \zeta_K(\tfrac{d}{2}) \right) \\
			=O(\lambda(E_{T,c}))
		\end{gather*}
		On the other hand, $\lambda(E_{T,c})$ is clearly a lower bound for this expression, hence the second moment of the function $\widehat{\mathbbm{1}}_{E_{T,c}}$ has the asymptotic bound
		\begin{equation}\label{asbound}
			\left|\int_X \left(\widehat{\mathbbm{1}}_{E_{T,c}} -\lambda(E_{T,c})\right)^2\,d\mu\right|\sim O(\text{vol}(E_{T,c}))
		\end{equation}
		
		We can now show that $\widehat{\mathbbm{1}}_{E_{T,c}}\circ g_t$ is a mean-square continuous wide-sense stationary stochastic process. By Theorem \ref{Rogers3}, for $s\leq t \in\R$, we can write
			\begin{gather*}
				\int_X \widehat{\mathbbm{1}}_{E_{T,c}}(g_s\Lambda) \widehat{\mathbbm{1}}_{E_{T,c}}(g_t\Lambda) \,d\mu(\Lambda) = \lambda(g_{-s}E_{T,c})\lambda({g_{-t}}E_{T,c}) \\
				+ \sum_{c \in K^\times} \int_{K_S^d} \mathbbm{1}_{E_{T,c}}(g_s \vb{x})\mathbbm{1}_{E_{T,c}}(cg_t\vb{x})\,d\lambda(\vb{x}) \\
				= \lambda(E_{T,c})^2 + \sum_{c\in K^\times} \lambda(g_{-s}E_{T,c}\cap c^{-1}g_{-t}E_{T,c})
			\end{gather*}
		Since the flow $g_t$ is continuous, the expression inside the sum is continuous in $s$ and $t$ for all $c\in K^\times$. Applying the measure-preserving transformation $g_{\tfrac{t-s}{2}}$ then shows the process is mean-square continuous. The $g_t$-invariance of the measure $\lambda$ also implies that, for any $\tau\in \R$, 
		\begin{gather*}
			\lambda(g_{-s}E_{T,c}\cap c^{-1}g_{-t}E_{T,c}) = \lambda\left(g_{\tau}\left(g_{-s}E_{T,c}\cap c^{-1}g_{-t}E_{T,c}\right)\right)\\=\lambda(g_{\tau-s}E_{T,c}\cap c^{-1}g_{\tau-t}E_{T,c})
		\end{gather*}
		showing the process is wide-sense stationary.
		
		Equation (3.2) of \cite{alamEquidistributionHomogeneousSpaces2020} states that, for $1<R<T$,
		\begin{gather*}
			\#((E_{T,c}\setminus E_{R,c})\cap \Lambda) \leq \frac{1}{R} \int_0^T \widehat{\mathbbm{1}}_{E_{R,c}}(g_t\Lambda)\,dt \leq \#(E_{R+T,c}\cap \Lambda)
		\end{gather*}
		Squaring both sides, integrating over $X$ with respect to $\Lambda$, we get
		\begin{equation*}
			\int_X \widehat{\mathbbm{1}}_{E_{T,c}\setminus E_{R,c}}^2\,d\mu \leq \frac{1}{R^2}\int_X \left(\int_0^T \widehat{\mathbbm{1}}_{E_{R,c}}(g_t\Lambda)\,dt\right)^2\,d\mu \leq \int_X \widehat{\mathbbm{1}}_{E_{T+R,c}}\,d\mu 
		\end{equation*}
		It is easy to see that $\lambda(E_{T,c})=\alpha T$ for some $\alpha >0$ depending on $K$, $c$ and $d$ only. Equation (\ref{asbound}) allows us to express this as, for some $c_1,c_2 >0$,
		\begin{align*}
			&\alpha^2(T-R)^2 + \alpha^2c_1(T-R) \\\leq &\frac{1}{T^2}\int_X \left(\int_0^R \widehat{\mathbbm{1}}_{E_{R,c}}(g_t\Lambda)\,dt\right)^2\,d\mu \\\leq &\alpha^2(T+R)^2 + \alpha c_2(T+R)
		\end{align*}
		With some rearranging, we see that
		\begin{equation*}
			\left| \int_X\left(\frac{1}{T}\int_0^T \widehat{\mathbbm{1}}_{E_{R,c}}^2(g_t\Lambda)\,dt - \lambda(E_{R,c}) \right)^2\,d\mu\right| =O_R\left(\frac{1}{T}\right) 
		\end{equation*}
		and Corollary \ref{gaposhkinCorr} then implies the result.
		
		Furthermore, for $\nu$-almost every $\vartheta \in \text{Mat}_{m\times n }(K_S)$, the lattice $\Lambda_\vartheta$ is Birkhoff-generic with respect to the function $\widehat{\mathbbm{1}}_{E_{T,c}}$ \cite[Theorem 2.3]{alamEquidistributionHomogeneousSpaces2020}, so we may also conclude 
		\begin{equation*}
			\#(\Lambda_\vartheta \cap E_{T,c}) = \lambda(E_{T,c}) + o\left(T^\frac{1}{2}\left(\log T\right)^{\frac{3}{2}}\left(\log\log T\right)^{\frac{1}{2}+\epsilon}\right) 
		\end{equation*}
		for $\nu$-almost every $\vartheta \in \text{Mat}_{m\times n}(K_S)$.
	\end{proof} 
	\begin{proof}[proof of Theorem \ref{spiralling}]
		The proof of this theorem follows closely the proof above. Since $\lambda(E_{T,c}(A,B)) = \lambda(E_{T,c})\text{vol}(A)\text{vol}(B)$, a similar calculation to that in the above proof shows
		\begin{equation*}
			\left|\int_X\left(\widehat{\mathbbm{1}}_{E_{T,c}(A,B)}-\lambda(E_{T,c}(A,B))\right)^2 \,d\mu \right| = O(\lambda(E_{T,c}(A,B)))
		\end{equation*}
		and by applying Corollary \ref{gaposhkinCorr} as above, we prove the theorem.
	\end{proof}
	
	\bibliographystyle{abbrv}
	\typeout{}
	\bibliography{./Bibliography}
	
\end{document}